\documentclass[11pt,leqno]{amsart}
\usepackage{amsmath,amsthm,amssymb,graphicx} 

\theoremstyle{plain}
\newtheorem{thm}{Theorem}[section]
\newtheorem{prop}[thm]{Proposition}
\newtheorem{cor}[thm]{Corollary}
\newtheorem{lem}[thm]{Lemma}

\theoremstyle{remark}
\newtheorem{rem}[thm]{Remark}

\theoremstyle{definition}

\newtheorem{defn}[thm]{Definition}


\oddsidemargin = 16pt
\evensidemargin = 16pt
\textwidth = 420pt
\textheight = 600pt

\def\T{\mathbb{T}}
\def\I{\mathbb{I}}

\def\ds{\displaystyle}
\def\ts{\textstyle}

\DeclareMathOperator*{\esssup}{ess\,sup}
\makeatletter
\DeclareMathOperator*{\essinf}{ess\,inf}
\makeatletter
\DeclareMathOperator*{\Var}{Var}
\makeatletter

\makeatletter

\pagestyle{plain}
\numberwithin{equation}{section}
\numberwithin{figure}{section}
\numberwithin{table}{section}


\begin{document}

\renewcommand{\thefootnote}{\fnsymbol{footnote}}

\footnote[0]{JEL Classification: 
G10, G13; G22}



\footnote[0]{2000 Mathematics Subject Classification: 
Primary 62P05; Secondary 91B28.}




\title{Dynamic risk diversification and insurance premium principles}
\author{
Kei Fukuda, Akihiko Inoue and 
Yumiharu Nakano}
\address{Japan Credit Rating Agency, Ltd. \\
Jiji Press Building, 5-15-8 Ginza, Chuo-Ku \\
Tokyo 104-0061, Japan}
\email{fukuda@jcra.com}
\address{Department of Mathematics \\
Graduate School of Science \\
Hiroshima University \\
Higashi-Hiroshima 739-8526, Japan}
\email{inoue100@hiroshima-u.ac.jp}
\address{Graduate School of Innovation Management \\
Tokyo Institute of Technology \\
2-12-1 Ookayama 152-8552, Tokyo, Japan \\
and PRESTO, Japan Science and Technology Agency \\
4-1-8 Honcho Kawaguchi, Saitama 332-0012, Japan}
\email{nakano@craft.titech.ac.jp}

\date{June 8, 2009}

\keywords{Premium principle, risk diversification, indifference valuation, 
Pareto optimality, dynamic risk measure}

\begin{abstract}
We present an approach to the dynamic valuation of 
exposure risks in the multi-period setting, which 
incorporates a dynamic and multiple diversification of risks 
in Pareto optimal sense. 
This approach extends classical indifference premium principles and 
can be applied for the valuation of insurance risks. 
In particular, our method produces explicit computation formulas 
for the dynamic version of 
the exponential premium principles. 
Moreover, we show limit theorems asserting that the risk 
loading for our valuation decreases to zero when the number 
of divisions of a risk goes to infinity. 
\end{abstract}

\maketitle


\section{Introduction}\label{sec:1}

In premium calculations, the insurer generally requires a premium rule to 
charge a conservative margin, the so-called risk loading or 
safety loading, in exchange for accepting the insurance risk. 
When dealing with a portfolio of insurance contracts, 
the resulting risk comes from the uncertain time and size of 
loss in individual insurance contracts. 
Thus, the risk loading should reflect both the multi-period 
and characteristic risks of individual products. 
In other words, the premium should be determined via an appropriate 
valuation of the cash-flows generated by several contracts. 
For this problem, W\"{u}thrich et al.\ \cite{WBF} develop a 
multidimensional valuation method using the state price deflator. 
Dynamic market methods are exploited by Delbaen and Haezendonck \cite{DH}, 
M{\o}ller and Steffensen \cite{MS}, and the references therein. 

In this paper, we present a valuation method for 
portfolios of cash flows, which incorporates a dynamic and multiple 
diversification of risks in Pareto optimal sense. 
Consider a portfolio of $n$ liabilities 
$Z=\sum_{(i,s)\in\T_{n,t}}C_{i,s}$, 
where $\T_{n,t}=\{1,\dots,n\}\times\{t,\dots,T\}$, 
and $(C_{i,s})$ is adapted to a given filtration $(\mathcal{F}_s)$. 
Then we call an adapted process $(X_{i,s})_{(i,s)\in\T_{n,t}}$ 
with suitable integrability conditions
a {\em diversification} or {\em allocation} of $Z$ if 
\begin{equation*}
 \sum_{(i,s)\in\T_{n,t}}X_{i,s}=Z
\end{equation*}
(see Definition \ref{def:2.1} below for the precise statement). 
We consider a multidimensional (or matrix-valued) conditional expected utility 
$(E[u_{i,s}(\cdot)|\mathcal{F}_t])_{(i,s)\in\T_{n,t}}$ 
to evaluate the cash flow $(X_{i,s})_{(i,s)\in\T_{n,t}}$, and 
we define the utility $U_t=U_{n,t}$ of $Z$ by the following dynamic 
version of sup-convolution of $E[u_{i,s}(\cdot)|\mathcal{F}_t]$'s: 
\begin{equation*}
 U_{t}(Z)=\esssup_{(X_{i,s})}\sum_{(i,s)\in\T_{n,t}}
  E[u_{i,s}(X_{i,s})|\mathcal{F}_t], 
\end{equation*}
where the essential supremum is taken over all diversifications $(X_{i,s})$ of $Z$. 
The term {\em convolution} comes from convex analysis (cf. Rockafellar 
\cite{Ro}). In the context of mathematical finance, 
the convolutions of static risk measures or monetary utility functions 
are discussed in Delbaen \cite{De}, Barrieu and El Karoui \cite{BE},  
Jouini et al.\ \cite{JST}, and Kl\"oppel and Schweizer \cite{KS}. 
The advantage of using the convolution is that 
a maximizer $(X_{i,s})$ becomes a Pareto optimal allocation of 
$Z$ (see Proposition \ref{prop:2.3} below). 
Thus, the utility $U_{t}$ induces a reconstruction of the 
cash flow $Z=\sum_{(i,s)}C_{i,s}$ in Pareto optimal sense, based on 
the insurer's risk preference 
$(E[u_{i,s}(\cdot)|\mathcal{F}_t])_{(i,s)\in\T_{n,t}}$. 

In the insurance literature, Pareto optimality has been considered by 
Arrow \cite{Ar}, Borch \cite{Bo1,Bo2}, B\"uhlmann \cite{Bu2}, 
Gerber \cite{G}, and many others.  
Recently, many authors study the allocation problems with 
risk measures. 
See Acciao \cite{A}, \cite{BE}, Burgert and R\"uschendorf \cite{BR}, 
Heath and Ku \cite{HK}, and \cite{JST}. 
Usually, Pareto optimality is discussed in terms of 
several economic agents such as in reinsurance 
and equilibrium theory. 
We employ another view; we consider 
Pareto optimality in evaluating portfolios of cash flows for a single 
agent. 

The first aim of this paper is to study the following premium principle 
$H_{t}=H_{n,t}$ defined by the indifference principle for $U_{t}$: 
\begin{equation*}
 H_{t}(Z)=\essinf\{K : U_{t}(K-Z)\ge U_{t}(0)
  \;\text{a.s.}\}, 
\end{equation*}
where $K$'s are taken from $\mathcal{F}_t$-measurable random variables
(see Definition \ref{def:2.5}). 
This premium principle generalizes the so-called principle of 
zero-utility as stated in, e.g., B\"uhlmann \cite{Bu1}. 
A financial counterpart of this valuation method is called  
the {\it indifference pricing} method, which has been widely used 
methods in incomplete markets (see, e.g., 
Hodges and Neuberger \cite{HN}, Rouge and El Karoui \cite{RE}, 
Musiela and Zariphopoulou \cite{MZ1,MZ2}, Bielecki et al.\ \cite{BJR}, and 
M{\o}ller and Steffensen \cite{MS}). 
The premium $H_{t}(Z)$ is the minimum capital requirement for which 
the insurer with utility $U_t$ is willing to sell the risk $Z$. 
If there exists a maximizer for the essential supremum of 
$U_{t}(H_{t}(Z)-Z)$, then, by Proposition \ref{prop:2.3} below, 
the risk $H_{t}(Z)- Z$ allows for a Pareto optimal allocation. 
In other words, we can interpret $H_{t}(Z)$ as the minimal amount 
such that the resulting residual risk 
$H_{t}(Z)-Z$ is diversified by a Pareto optimal allocation 
and is preferable to zero risk with respect to the preference defined by 
$U_{t}$. 

The second aim of this paper is to study the asymptotic behavior of 
$H_{n,0}$ as the number of divisions of a risk goes to infinity. 
We show two such limit theorems. 
The first one states that the indifference premium $H_{n,0}(Z)$ 
converges to $E(Z)$ as $n$ goes to infinity. 
The second one concerns the large number of divisions of time. 
These results may give a different view to the general principle of insurance 
systems, which is usually explained by the law of large numbers 
for IID random variables.

This paper is organized as follows. 
In Section \ref{sec:2.1}, 
we give rigorous definitions of 
$U_{t}$ and $H_{t}$, and exhibit some basic properties of them. 
In Section \ref{sec:2.2}, we study the maximization problem defined by $U_t$.  
Section \ref{sec:2.3} is devoted to the case of 
exponential utility functions. 
In Section \ref{sec:3}, we study asymptotic behaviors of $H_{n,0}$. 
Finally, in Section 4, we apply our approach 
to products of fixed payment type, including 
life insurance products and bank loans.

\section{indifference premium}\label{sec:2}

\subsection{Definitions and general properties}\label{sec:2.1}

Let $n,T$ be positive integers, and consider index sets 
$\T_{m,\tau}:=\{1,\dots,m\}\times\{\tau,\dots,T\}$ 
$(m=1,\dots,n,\;\tau=0,\dots,T)$. 
Let $(\Omega,\mathcal{F},\{\mathcal{F}_t\}_{t=0,\dots,T},P)$ 
be a filtered probability space. 
We work on $L^{\infty}_t:=L^{\infty}(\Omega,\mathcal{F}_t,P)$, 
$t=0,\dots,T$, for the space of exposure risks. 
All inequalities and equalities applied to random variables are 
meant to hold $P$-a.s. 
We consider an element $Z\in L^{\infty}_T$ as the sum of cash flows of risks 
to be valued, discounted by some reference asset. 
Examples include the following life insurance contract: 
\begin{equation*}
 Z=\sum_{(i,s)\in\T_{n,1}} c_{i,s} 1_{(s-1<\tau_i\le s)}, 
\end{equation*}
where $c_{i,s}$ is the discounted payment to be paid at time 
$s=1,\dots,T$ if the $i$-th insured dies in the interval $(s-1,s]$, and 
$\tau_i$ denotes the future life time of the $i$-th insured.

We define the diversification of risk as follows: 
\begin{defn}
\label{def:2.1}
We call a process $(X_{i,s})_{(i,s)\in\T_{m,t}}$ a 
{\em diversification} or {\em allocation}
of $Z\in L^{\infty}_T$ if 
\begin{equation*}
 \sum_{(i,s)\in\T_{m,t}}X_{i,s}=Z, \quad 
 X_{i,s}\in L^{\infty}_s, \quad (i,s)\in\T_{m,t}. 
\end{equation*}
We write the set of all diversifications of $Z$ as 
$\mathcal{A}_{m,t}(Z)$. 
\end{defn}

For $(i,s)\in\T_{n,0}$, 
let $u_{i,s}:\mathbb{R}\to\mathbb{R}$ be a strictly increasing, strictly 
concave function of class $C^1$, satisfying 
\begin{equation*}
 u_{i,s}(0)=0, \;\; u_{i,s}^{\prime}(0)=1, \;\; u_{i,s}^{\prime}(+\infty)=0, 
 \;\; u_{i,s}^{\prime}(-\infty)=\infty, 
\end{equation*}
and assume that the agent's risk preference for the $i$-th risk at time 
$s\in\{0,\dots,T\}$, evaluated at $t\in\{0,\dots,s\}$, is 
described by the conditional expected utility 
$E[u_{i,s}(\cdot)|\mathcal{F}_t]$. 
Thus, the risk of the cash flow $Z=\sum_{(i,s)\in\T_{n,t}}X_{i,s}$ 
is evaluated by the conditional expected utility matrix 
$(E[u_{i,s}(X_{i,s})|\mathcal{F}_t])_{(i,s)\in\T_{n,t}}$. 

Now we introduce the utility $U_{t}$ defined by 
\begin{equation}
\label{eq:2.1}
 U_{t}(Z)=\esssup_{(X_{i,s})\in\mathcal{A}_{n,t}(Z)}
  \sum_{(i,s)\in\T_{n,t}}E[u_{i,s}(X_{i,s})|\mathcal{F}_t], 
   \quad Z\in L^{\infty}_T. 
\end{equation}
This is a version of sup-convolution in convex analysis, 
which takes into account the multiperiod information structure. 

The next proposition shows the basic relationship between 
$U_t$ and Pareto optimality. 
\begin{prop}
\label{prop:2.3}
 Suppose that $(X_{i,s})\in\mathcal{A}_{t}(Z)$ attains the 
essential supremum in $(\ref{eq:2.1})$. Then $(X_{i,s})$ is a 
Pareto optimal allocation of $Z$ in the sense that for 
$(Y_{i,s})\in\mathcal{A}_{n,t}(Z)$, 
\begin{align*}
 & E[u_{i,s}(Y_{i,s})|\mathcal{F}_t]\ge E[u_{i,s}(X_{i,s})|\mathcal{F}_t], \;\; 
  \forall (i,s)\in\I\times\T_{t} \\ 
&\Longrightarrow \;\; 
 E[u_{i,s}(Y_{i,s})|\mathcal{F}_t]\ge E[u_{i,s}(X_{i,s})|\mathcal{F}_t], \;\; 
  \forall (i,s)\in\I\times\T_{t}. 
\end{align*}
\end{prop}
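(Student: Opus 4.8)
The plan is to read the intended conclusion as the standard Pareto condition: the displayed implication should terminate in \emph{equality}, namely $E[u_{i,s}(Y_{i,s})|\mathcal{F}_t]=E[u_{i,s}(X_{i,s})|\mathcal{F}_t]$ for all $(i,s)\in\T_{n,t}$, expressing that no coordinate can be strictly improved without strictly worsening another. I would prove this form, and the entire argument is driven by the maximality of $(X_{i,s})$ rather than by any property of the utilities beyond monotonicity.

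First I would record the consequence of maximality. Since $(X_{i,s})$ attains the essential supremum in $(\ref{eq:2.1})$, the definition of $U_t$ gives, for every competing diversification $(Y_{i,s})\in\mathcal{A}_{n,t}(Z)$,
\begin{equation*}
 \sum_{(i,s)\in\T_{n,t}}E[u_{i,s}(Y_{i,s})|\mathcal{F}_t]
 \le \sum_{(i,s)\in\T_{n,t}}E[u_{i,s}(X_{i,s})|\mathcal{F}_t]
 \quad\text{a.s.}
\end{equation*}
Next I would bring in the hypothesis. Assuming the coordinatewise domination $E[u_{i,s}(Y_{i,s})|\mathcal{F}_t]\ge E[u_{i,s}(X_{i,s})|\mathcal{F}_t]$ a.s.\ for each $(i,s)$, I would sum over the finitely many indices $(i,s)\in\T_{n,t}$ to obtain the reverse inequality for the two sums. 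Combining this with the maximality inequality above forces equality of the two sums almost surely.

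Finally I would descend from the sums back to the individual coordinates. Setting $D_{i,s}:=E[u_{i,s}(Y_{i,s})|\mathcal{F}_t]-E[u_{i,s}(X_{i,s})|\mathcal{F}_t]$, the hypothesis gives $D_{i,s}\ge 0$ a.s.\ for each $(i,s)$, while the previous step gives $\sum_{(i,s)\in\T_{n,t}}D_{i,s}=0$ a.s. A finite sum of nonnegative random variables vanishes almost surely only if each summand vanishes almost surely, so $D_{i,s}=0$ a.s.\ for every $(i,s)$, which is precisely the Pareto conclusion. I do not expect a substantive obstacle here, as the argument is essentially bookkeeping around the essential supremum; the only point needing care is that every inequality is an almost-sure statement about $\mathcal{F}_t$-measurable random variables rather than about real numbers. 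Because the index set $\T_{n,t}$ is finite, the ``nonnegative terms summing to zero'' step may be carried out on a single common full-measure set, so no measurable-selection or countable-intersection difficulties intervene.
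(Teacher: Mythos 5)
Your proof is correct and is essentially the paper's argument run in the direct rather than the contrapositive direction: the paper assumes a weakly dominating $(Y_{i,s})$ with a strict improvement on a positive-probability set and contradicts optimality of the sum, while you sum the nonnegative differences $D_{i,s}$, bound the sum by zero via maximality, and conclude each $D_{i,s}=0$ a.s. You are also right that the displayed conclusion in the statement is a typo and should assert equality (or, equivalently, rule out strict improvement in any coordinate), which is exactly what both arguments establish.
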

\begin{proof}
 Suppose that there exist $(Y_{i,s})_{(i,s)\in\T_{n,t}}$ and 
$(k,\tau)\in\T_{n,t}$ such that
\begin{gather*}
 E[u_{i,s}(Y_{i,s})|\mathcal{F}_t]\ge E[u_{i,s}(X_{i,s})|\mathcal{F}_t], 
  \;\; \forall (i,s)\in\T_{n,t}, \\
 P(E[u_{k,\tau,t}(Y_{k,\tau})|\mathcal{F}_t]> 
   E[u_{k,\tau,t}(X_{k,\tau})|\mathcal{F}_t])>0. 
\end{gather*}
Then $\sum_{(i,s)}E[u_{i,s}(Y_{i,s})|\mathcal{F}_t]\ge 
\sum_{(i,s)}E[u_{i,s}(X_{i,s})|\mathcal{F}_t]$
and this inequality is strict with positive probability. 
However this contradicts the optimality of $(X_{i,s})$. 
\end{proof}

By the proposition above, the utility $U_{t}$ induces a 
reconstruction of the cash flow $Z=\sum_{(i,s)}C_{i,s}$ 
in Pareto optimal sense, 
based on the insurer's risk preference 
$(E[u_{i,s}(\cdot)|\mathcal{F}_t])$. 
It should be noted that Pareto optimality here means 
the non-inferiority in multi-objective optimization. 

The next proposition shows that many properties of 
$(E[u_{i,s}(\cdot)|\mathcal{F}_t])_{(i,s)\in\T_{n,t}}$ 
carry over to $U_{t}$. 
\begin{prop}
\label{prop:2.4}
The conditional utility $U_{t}$ maps $L^{\infty}_t$ to 
 $L^{\infty}_t$ with $U_t(0)=0$ and satisfies the following properties: 
\begin{enumerate}
 \item Monotonicity: $U_{t}(X)\ge U_{t}(Y)$ for  $X,Y\in L^{\infty}_T$ such that 
  $X\ge Y $. 
 \item Concavity: $U_{t}(aX+(1-a)Y)\ge aU_t(X)+(1-a)U_t(Y)$ for 
 $X,Y\in L^{\infty}_T$ and $a\in (0,1)$. 
\end{enumerate} 
\end{prop}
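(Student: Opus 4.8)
The plan is to verify the four assertions — the mapping property, $U_t(0)=0$, monotonicity, and concavity — in turn, all of them flowing from the normalizations imposed on the $u_{i,s}$ together with the defining least-upper-bound property of the essential supremum. First I would record the tangent-line bound: since each $u_{i,s}$ is concave with $u_{i,s}(0)=0$ and $u_{i,s}'(0)=1$, we have $u_{i,s}(x)\le x$ for all $x\in\mathbb{R}$. Hence for any $(X_{i,s})\in\mathcal{A}_{n,t}(Z)$,
\begin{equation*}
 \sum_{(i,s)\in\T_{n,t}}u_{i,s}(X_{i,s})\le\sum_{(i,s)\in\T_{n,t}}X_{i,s}=Z,
\end{equation*}
so taking $E[\cdot|\mathcal{F}_t]$ and then the essential supremum gives $U_t(Z)\le E[Z|\mathcal{F}_t]\le\|Z\|_{\infty}$. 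For a matching lower bound I would exhibit one admissible diversification, namely $X_{1,T}=Z$ and $X_{i,s}=0$ otherwise, which lies in $\mathcal{A}_{n,t}(Z)$ because $Z\in L^{\infty}_T$; using $u_{i,s}(0)=0$ its value is $E[u_{1,T}(Z)|\mathcal{F}_t]\ge u_{1,T}(-\|Z\|_{\infty})$, a finite constant. Thus $U_t(Z)\in L^{\infty}$, and since $U_t(Z)$ is the essential supremum of an $\mathcal{F}_t$-measurable family it is itself $\mathcal{F}_t$-measurable, which is the mapping property. For $U_t(0)$, the upper bound gives $U_t(0)\le 0$, while the all-zero allocation is admissible for $Z=0$ with value $0$, so $U_t(0)=0$.

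For monotonicity, given $X\ge Y$ in $L^{\infty}_T$, I would fix any $(Y_{i,s})\in\mathcal{A}_{n,t}(Y)$ and modify it only in the terminal slot, setting $\tilde X_{1,T}=Y_{1,T}+(X-Y)$ and $\tilde X_{i,s}=Y_{i,s}$ otherwise; since $X-Y\ge 0$ and $X-Y\in L^{\infty}_T$, this defines an element of $\mathcal{A}_{n,t}(X)$. Because $u_{1,T}$ is increasing, $u_{1,T}(\tilde X_{1,T})\ge u_{1,T}(Y_{1,T})$, whence
\begin{equation*}
 U_t(X)\ge\sum_{(i,s)\in\T_{n,t}}E[u_{i,s}(\tilde X_{i,s})|\mathcal{F}_t]
  \ge\sum_{(i,s)\in\T_{n,t}}E[u_{i,s}(Y_{i,s})|\mathcal{F}_t].
\end{equation*}
As this holds for every diversification of $Y$, taking the essential supremum over $(Y_{i,s})$ yields $U_t(X)\ge U_t(Y)$.

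For concavity I would begin from the stability of $\mathcal{A}_{n,t}$ under convex combinations: if $(X_{i,s})\in\mathcal{A}_{n,t}(X)$ and $(Y_{i,s})\in\mathcal{A}_{n,t}(Y)$, then $(aX_{i,s}+(1-a)Y_{i,s})\in\mathcal{A}_{n,t}(aX+(1-a)Y)$. Pointwise concavity of each $u_{i,s}$ together with linearity and monotonicity of $E[\cdot|\mathcal{F}_t]$ then gives
\begin{equation*}
 U_t(aX+(1-a)Y)\ge a\sum_{(i,s)}E[u_{i,s}(X_{i,s})|\mathcal{F}_t]
  +(1-a)\sum_{(i,s)}E[u_{i,s}(Y_{i,s})|\mathcal{F}_t].
\end{equation*}
The delicate point — and the one I regard as the main obstacle — is to pass from this to $U_t(aX+(1-a)Y)\ge aU_t(X)+(1-a)U_t(Y)$ while the two families of diversifications vary independently, without any interchange of suprema or limits. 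I would use only the characterization of the essential supremum as the least $\mathcal{F}_t$-measurable upper bound: fixing $(Y_{i,s})$, the $\mathcal{F}_t$-measurable quantity $U_t(aX+(1-a)Y)-(1-a)\sum_{(i,s)}E[u_{i,s}(Y_{i,s})|\mathcal{F}_t]$ is an upper bound for $a\sum_{(i,s)}E[u_{i,s}(X_{i,s})|\mathcal{F}_t]$ as $(X_{i,s})$ ranges over $\mathcal{A}_{n,t}(X)$, hence dominates $aU_t(X)$; rearranging and repeating the argument in the $Y$-variable yields the claim. Should a more constructive route be preferred, the same conclusion follows after verifying that the value families are upward directed via the pasting $1_G X^1_{i,s}+1_{G^c}X^2_{i,s}$ on the $\mathcal{F}_t$-set $G=\{A_1\ge A_2\}$, which is admissible because $\mathcal{F}_t\subseteq\mathcal{F}_s$ and which realizes $\max(A_1,A_2)$ as an attainable value.
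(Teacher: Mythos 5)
Your proposal is correct and follows essentially the same route as the paper: the tangent-line bound $u_{i,s}(x)\le x$ for the upper estimate and $U_t(0)\le 0$, the trivial allocation for $U_t(0)\ge 0$, the terminal-slot modification for monotonicity, and convex combinations of diversifications for concavity. You merely fill in two details the paper leaves implicit — the finite lower bound needed for the mapping into $L^\infty_t$, and the two-step essential-supremum argument behind the paper's ``the concavity follows easily'' — both of which are handled correctly.
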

\begin{proof}
 It follows from $u_{i,s}(x)\le x$ $(x\in\mathbb{R})$ for $(i,s)\in\T_{n,t}$ 
 that $U_t(X)\le E[X|\mathcal{F}_t]$. In particular, $U_t$ maps to $L_t^{\infty}$ 
 and satisfies $U_t(0)\le 0$.  
 Considering the trivial diversification $0=\sum_{(i,s)}X_{i,s}$ with $X_{i,s}=0$, 
 we find $U_t(0)\ge 0$. Thus $U_t(0)=0$ follows. 
 
To prove the monotonicity, let $X\ge Y$ and $(Y_{k,s})\in\mathcal{A}_{n,t}(Y)$, 
and take $(X_{k,s})\in\mathcal{A}_{n,t}(X)$ defined by 
\begin{equation*}
 X_{k,s}=\begin{cases}
	  Y_{k,s}, & (k,s)\neq (i,T), \\
          Y_{k,s}+X-Y, & (k,s)=(i,T). 
	 \end{cases}
\end{equation*}
Then, by the monotonicity of $u_{i,T}(\cdot)$, 
\begin{equation*}
 U_{t}(X)\ge \sum_{(k,s)\in\T_{n,t}}E[u_{k,s}(X_{k,s})|\mathcal{F}_t]
  \ge \sum_{(k,s)\in\T_{n,t}}E[u_{k,s}(Y_{k,s})|\mathcal{F}_t]. 
\end{equation*}
Taking the supremum of the right-hand side over $(Y_{k,s})$, we get 
$U_{t}(X)\ge U_{t}(Y)$D

For $(X_{i,s})\in\mathcal{A}_{n,t}(X)$, 
$(Y_{i,s})\in\mathcal{A}_{n,t}(Y)$, 
and $a\in (0,1)$, we have 
$(aX_{i,s}+(1-a)Y_{i,s})\in\mathcal{A}_{n,t}(aX+(1-a)Y)$. 
From this the concavity of $U_{t}$ follows easily. 
\end{proof}

Now we shall introduce a premium principle by the indifference 
valuation with respect to the utility $U_{t}$. 
\begin{defn}
\label{def:2.5}
We call the $\mathcal{F}_t$-measurable random variable $H_{t}(Z)$ given by 
\begin{equation*}
 H_{t}(Z):=\essinf\left\{K\in L^{\infty}_t: 
  U_{t}(K-Z)\ge U_{t}(0) \right\}
\end{equation*}
the \emph{indifference premium} of $Z\in L^{\infty}_T$ at time $t=0,\dots,T$. 
\end{defn}

We exhibit some elementary properties of the indifference premium $H_{t}$. 
\begin{prop}
\label{prop:2.7}
The indifference premium $H_{t}$ maps $L^{\infty}_t$ to $L^{\infty}_t$ with 
$H_t(0)=0$ and 
satisfies the following properties: 
\begin{enumerate}
 \item Monotonicity: $H_{t}(X)\ge H_{t}(Y)$ for $X,Y\in L^{\infty}_T$ such that 
  $X\ge Y$. 
 \item Convexity: $H_t(aX+(1-a)Y)\le aH_t(X)+(1-a)H_t(Y)$ for 
  $X,Y\in L^{\infty}_T$ and $a\in (0,1)$. 
 \item Risk loading property: 
 $H_{t}(Z)\ge E(Z|\mathcal{F}_t)$, $Z\in L^{\infty}_T$. 
 \item Translation invariance: 
  $H_{t}(Z+C)=H_{t}(Z)+C$ for $Z\in L^{\infty}_T$ and $C\in L^{\infty}_t$. 
\end{enumerate}
\end{prop}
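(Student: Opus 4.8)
The plan is to read off all four properties directly from the definition of $H_t$ together with the structural facts about $U_t$ from Proposition \ref{prop:2.4} --- namely $U_t(0)=0$, monotonicity, concavity, and the domination $U_t(W)\le E[W|\mathcal{F}_t]$ noted in its proof (which comes from $u_{i,s}(x)\le x$). The only genuinely new ingredient is careful bookkeeping with the essential infimum. Writing $S(Z):=\{K\in L^{\infty}_t: U_t(K-Z)\ge U_t(0)\}$ for the constraint set, I will repeatedly use that $H_t(Z)=\essinf S(Z)$ is the \emph{greatest} lower bound of $S(Z)$: any fixed $\mathcal{F}_t$-measurable random variable that is dominated by every member of $S(Z)$ is automatically dominated by $H_t(Z)$.

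First I would settle well-definedness, the risk-loading property (iii), and $H_t(0)=0$. For the lower bound, every $K\in S(Z)$ satisfies $0\le U_t(K-Z)\le E[K-Z|\mathcal{F}_t]=K-E[Z|\mathcal{F}_t]$, so $E[Z|\mathcal{F}_t]$ is a lower bound for $S(Z)$ and hence $H_t(Z)\ge E[Z|\mathcal{F}_t]\ge -\|Z\|_{\infty}$; this is exactly (iii). For the upper bound I would exhibit a bounded element of $S(Z)$: the trivial diversification assigning $\|Z\|_{\infty}-Z\ge 0$ to a single coordinate and $0$ to the rest, together with monotonicity of the $u_{i,s}$ and $u_{i,s}(0)=0$, gives $U_t(\|Z\|_{\infty}-Z)\ge 0=U_t(0)$, so the constant $\|Z\|_{\infty}$ lies in $S(Z)$ and $H_t(Z)\le\|Z\|_{\infty}$. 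Thus $H_t(Z)\in L^{\infty}_t$. Finally $0\in S(0)$ yields $H_t(0)\le 0$, while (iii) gives $H_t(0)\ge 0$.

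Next come monotonicity (i) and translation invariance (iv), both of which are set manipulations. If $X\ge Y$ then $K-X\le K-Y$, so monotonicity of $U_t$ forces $S(X)\subseteq S(Y)$; the essential infimum over the smaller set is the larger, giving $H_t(X)\ge H_t(Y)$. For (iv) with $C\in L^{\infty}_t$, the substitution $K'=K-C$ is a bijection of $L^{\infty}_t$ that turns the constraint $U_t(K-Z-C)\ge U_t(0)$ into $U_t(K'-Z)\ge U_t(0)$, whence $S(Z+C)=C+S(Z)$ and therefore $H_t(Z+C)=C+H_t(Z)$.

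The most delicate step is convexity (ii), and here the greatest-lower-bound property does the work in place of any approximation of the essential infimum. Fix $a\in(0,1)$ and take arbitrary $K_X\in S(X)$, $K_Y\in S(Y)$. Concavity of $U_t$ applied to $aK_X+(1-a)K_Y-(aX+(1-a)Y)=a(K_X-X)+(1-a)(K_Y-Y)$ shows $aK_X+(1-a)K_Y\in S(aX+(1-a)Y)$, so $H_t(aX+(1-a)Y)\le aK_X+(1-a)K_Y$. Holding $K_Y$ fixed and rearranging gives $a^{-1}[H_t(aX+(1-a)Y)-(1-a)K_Y]\le K_X$ for every $K_X\in S(X)$; since the left-hand side is fixed, it is dominated by $\essinf S(X)=H_t(X)$, yielding $H_t(aX+(1-a)Y)\le aH_t(X)+(1-a)K_Y$. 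Running the same argument in the free variable $K_Y$ gives the claim. I expect this two-stage elimination of the free variables to be the main obstacle, since it is the only place where one must treat the essential infimum as a genuine greatest lower bound rather than a pointwise minimum; everything else reduces to monotonicity/concavity of $U_t$ and the domination $U_t(W)\le E[W|\mathcal{F}_t]$.
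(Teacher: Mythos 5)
Your proof is correct and follows essentially the same route as the paper's: the risk-loading bound via $U_t(W)\le E[W|\mathcal{F}_t]$, monotonicity and translation invariance via manipulations of the constraint set, and convexity via concavity of $U_t$ applied to convex combinations of feasible $K$'s. If anything you are slightly more complete than the paper, which omits the explicit upper bound $H_t(Z)\le\|Z\|_\infty$ (your trivial-diversification argument) and compresses your two-stage elimination of the free variables in the convexity step into the phrase ``since $K,L$ are arbitrary.''
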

\begin{proof}
If $K\in L^{\infty}_t$ satisfies  
$U_{t}(K-Z)\ge U_{t}(0)$, then by $U_{t}(Z)\le E(Z|\mathcal{F}_t)$ 
we have $0\le E[K-Z|\mathcal{F}_t]=K-E[Z|\mathcal{F}_t]$, whence 
$H_t(Z)\ge E[Z|\mathcal{F}_t]$. 
Thus $H_t$ maps to $L_t^{\infty}$ and satisfies $H_t(0)\ge 0$. 
Since $H_t(0)\le 0$ is trivial, $H_t(0)=0$ follows. 

To see the monotonicity, let $X,Y\in L^{\infty}_T$ with $X\ge Y$. 
Then for any $K\in L^{\infty}_t$ 
 satisfying $U_{t}(K-X)\ge U_{t}(0)=0$, the monotonicity of 
$U_{t}$ gives $U_{t}(K-Y)\ge U_{t}(0)$, implying $H_{t}(X)\ge H_{t}(Y)$. 

Let $X,Y\in L^{\infty}_T$ and $a\in (0,1)$. For $K,L\in L^{\infty}_t$ 
satisfying $U_{t}(K-X)\ge 0$ and $U_t(L-Y)\ge 0$, 
we have from the concavity of $U_{t}$ that 
$U_{t}(aK+(1-a)L-aX-(1-a)Y)\ge 0$. 
Thus $H_{t}(aX+(1-a)Y)\le aK+(1-a)L.$ 
Since $K,L$ are arbitrary, the convexity of $H_{t}$ follows. 

To prove the translation invariance, let $Z\in L^{\infty}_T$ and $C\in L^{\infty}_t$. 
If $K\in L^{\infty}_t$ satisfies 
$U_{t}(K-Z-C)\ge 0$, then 
$K-C\ge H_{t}(Z)$. Thus $H_{t}(C+Z)\ge H_{t}(Z)+C$. 
On the other hand, if $K\in L^{\infty}_t$ satisfies 
$U_{t}(K-Z)\ge 0$, then 
in view of $K-Z=K+C-Z-C$, we have 
$K+C\ge H_{t}(Z+C)$, which leads to 
$H_{t}(Z)\ge H_{t}(Z+C)-C$.

\end{proof}

\begin{rem}
\label{rem:2.14}
Recall that a sequence of mappings 
$\rho_t: L^{\infty}_T\to L^{\infty}_t$, $t=0,\dots,T$, is called 
a {\em dynamic convex risk measure} if the following conditions are satisfied: 
\begin{enumerate}
 \item If $X\le Y$, then $\rho_t(X)\ge\rho_t(Y)$. 
 \item $\rho_t$ is convex. 
 \item $\rho_t(X+K)=\rho_t(X)-K$ for $X\in L^{\infty}_T$ and 
  $K\in L^{\infty}_t$. 
\end{enumerate}
See, e.g., F\"ollmer and Penner \cite{FP} and 
Frittelli and Rosazza Gianin \cite{FR}. 
Therefore, the mappings $\rho_t: L^{\infty}_T\to L^{\infty}_t$, $t=0,\dots,T$, 
defined by 
\begin{equation*}
 \rho_t(Z):=H_{t}(-Z), \quad Z\in L^{\infty}_T 
\end{equation*}
give a dynamic convex risk measure on $L^{\infty}_T$. 
This is a dynamic counterpart of the connection 
between premium principles and static risk measures.  
\end{rem}

\subsection{Optimal diversification problem}\label{sec:2.2}

We shall study the maximization problem in (\ref{eq:2.1}). 
\begin{thm}
\label{thm:2.15}
Let $(X_{i,s})\in\mathcal{A}_{n,t}$. Then 
$(X_{i,s})$ is the maximizer for $(\ref{eq:2.1})$ if and only if 
$(u_{i,s}^{\prime}(X_{i,s}))_{s=t}^T$ 
does not depend on $i=1,\dots,n$ and 
 $(u_{1,s}^{\prime}(X_{1,s}))_{s=t}^T$ is a martingale. 
\end{thm}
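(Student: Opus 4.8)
The plan is to characterize the maximizer through first-order (Euler) conditions, exploiting the concavity of each $u_{i,s}$ so that these conditions are both necessary and sufficient. For necessity, I would construct two families of feasible perturbations that preserve the constraint $\sum X_{i,s}=Z$ and keep every component in the appropriate $L^{\infty}_s$, and then read off the two stated conditions; for sufficiency I would use the tangent-line inequality together with a martingale/summation-by-parts argument. Throughout, the boundedness built into the $L^{\infty}_s$ setting confines the arguments of $u'_{i,s}$ to a fixed compact set on which $u'_{i,s}$ is bounded and continuous, which is what legitimizes the analytic interchanges below.

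Assume first that $(X_{i,s})$ attains the essential supremum. To obtain the cross-sectional condition, fix a time $s$ and indices $i\ne j$, and for a bounded $\mathcal{F}_s$-measurable $\eta$ and $\theta\in\mathbb{R}$ set $X^{\theta}_{i,s}=X_{i,s}+\theta\eta$, $X^{\theta}_{j,s}=X_{j,s}-\theta\eta$, leaving the other components fixed; this is again a diversification of $Z$. Since $(X_{i,s})$ is optimal, the random function $G(\theta):=\sum_{(k,r)}E[u_{k,r}(X^{\theta}_{k,r})|\mathcal{F}_t]$ satisfies $G(\theta)\le G(0)$; checking this on rationals and extending by continuity (dominated convergence) makes $\theta=0$ an a.s.\ global maximum of the smooth concave map $G$, so differentiation under the conditional expectation gives $G'(0)=E[(u'_{i,s}(X_{i,s})-u'_{j,s}(X_{j,s}))\eta|\mathcal{F}_t]=0$. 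Choosing $\eta=\mathrm{sgn}(u'_{i,s}(X_{i,s})-u'_{j,s}(X_{j,s}))$ forces $u'_{i,s}(X_{i,s})=u'_{j,s}(X_{j,s})$ a.s., so $(u'_{i,s}(X_{i,s}))_{s=t}^T$ is independent of $i$.

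For the martingale condition, fix $i$ and $s<T$ and move an $\mathcal{F}_s$-measurable (hence also $\mathcal{F}_{s+1}$-measurable) amount $\theta\eta$ from $X_{i,s+1}$ to $X_{i,s}$. The same variational argument, combined with the tower property and the fact that $u'_{i,s}(X_{i,s})$ is $\mathcal{F}_s$-measurable, yields $u'_{i,s}(X_{i,s})=E[u'_{i,s+1}(X_{i,s+1})|\mathcal{F}_s]$. Writing $M_s:=u'_{1,s}(X_{1,s})$, the cross-sectional condition gives $M_s=u'_{i,s}(X_{i,s})$ for every $i$, whence $M_s=E[M_{s+1}|\mathcal{F}_s]$ and $(M_s)_{s=t}^T$ is a martingale. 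For the converse, assume both conditions and take any $(Y_{i,s})\in\mathcal{A}_{n,t}(Z)$. The tangent-line inequality $u_{i,s}(Y_{i,s})\le u_{i,s}(X_{i,s})+u'_{i,s}(X_{i,s})(Y_{i,s}-X_{i,s})$, summed and conditioned on $\mathcal{F}_t$, reduces the claim to $\sum_{(i,s)}E[M_s(Y_{i,s}-X_{i,s})|\mathcal{F}_t]\le 0$, where condition 1 let me replace $u'_{i,s}(X_{i,s})$ by the common value $M_s$. Setting $D_s:=\sum_i(Y_{i,s}-X_{i,s})\in L^{\infty}_s$ and $S_s:=\sum_{r=t}^s D_r$, both allocations summing to $Z$ force $S_{t-1}=S_T=0$; a discrete summation by parts gives $\sum_{s=t}^T M_s D_s=-\sum_{s=t}^{T-1}(M_{s+1}-M_s)S_s$, and since $S_s$ is $\mathcal{F}_s$-measurable with $E[M_{s+1}-M_s|\mathcal{F}_s]=0$, every term vanishes under $E[\cdot|\mathcal{F}_t]$. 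Thus the sum is in fact $0$, and $(X_{i,s})$ is optimal.

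The main obstacle I anticipate is the necessity direction: making the variational argument rigorous at the level of essential suprema, that is, passing from a.s.\ optimality of $(X_{i,s})$ against a continuum of perturbations to a pointwise first-order identity while controlling the parameter-dependent null sets, and justifying both the differentiation under the conditional expectation and the conditional analogue of the fundamental lemma of the calculus of variations (the choice $\eta=\mathrm{sgn}(\cdots)$). Once these are in place, the sufficiency direction is essentially the convexity and summation-by-parts computation above and should present no real difficulty.
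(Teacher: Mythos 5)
Your proposal is correct and follows essentially the same route as the paper: necessity via first-order conditions from feasible perturbations (you split the cross-sectional and temporal perturbations where the paper handles both with a single transfer $y1_A$ between two index pairs, and you use $\eta=\mathrm{sgn}(\cdots)$ where the paper uses indicators $1_A$), and sufficiency via the tangent-line inequality combined with the martingale property (your summation-by-parts with $S_s$ is an equivalent rewriting of the paper's direct replacement of $u'_{i,s}(X_{i,s})$ by $E[u'_{1,T}(X_{1,T})\mid\mathcal{F}_s]$). The technical points you flag about parameter-dependent null sets and differentiation under the conditional expectation are handled no more carefully in the paper's own proof, so no gap relative to the original.
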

\begin{proof}
Suppose that $(u_{i,s}^{\prime}(X_{i,s}))_{s=t}^T$ 
does not depend on $i\in\I$ and that 
 $(u_{1,s}^{\prime}(X_{1,s}))_{s=t}^T$ is a martingale. 
For $(Y_{i,s})\in\mathcal{A}_{n,t}(Z)$, 
the concavity of $u_{i,s}$'s and the martingale property give 
\begin{align*}
 &\sum_{(i,s)\in\T_{n,t}}E[u_{i,s}(Y_{i,s})|\mathcal{F}_t] 
 - \sum_{(i,s)\in\T_{n,t}}E[u_{i,s}(X_{i,s})|\mathcal{F}_t] \\ 
 & \le \sum_{(i,s)\in\T_{n,t}}E[u^{\prime}_{i,s}(X_{i,s})
  (Y_{i,s}-X_{i,s})|\mathcal{F}_t] 
 = \sum_{(i,s)\in\T_{n,t}}E[u^{\prime}_{1,T}(X_{1,T})
    (Y_{i,s}-X_{i,s})|\mathcal{F}_t] \\
 &= E\left[u^{\prime}_{1,T}(X_{1,T})\sum_{(i,s)\in\T_{n,t}}
    (Y_{i,s}-X_{i,s})\;\bigg|\;\mathcal{F}_t\right] =0. 
\end{align*}
Thus $(X_{i,s})$ is optimal. 

Conversely, suppose that $(X_{i,s})\in\mathcal{A}_{n,t}(Z)$ 
is optimal. 
Take distinct $(k,\tau),(j,r)\in\T_{n,t}$ with $\tau\ge r$ 
and $A\in\mathcal{F}_r$. Consider for $y\in\mathbb{R}$, 
\begin{equation*}
 Y_{i,s}^y:= 
\begin{cases}
 X_{i,s}+y1_A & \text{if}\; (i,s)=(k,\tau), \\
 X_{i,s}-y1_A & \text{if}\; (i,s)=(j,r), \\
 X_{i,s} & \text{otherwise}. 
\end{cases}
\end{equation*}
Then $(Y_{i,s}^y)\in\mathcal{A}_{n,t}(Z)$. Moreover  
the optimality of $(X_{i,s})$ implies the (random) function 
\begin{equation*}
 f(y)=\sum_{(i,s)\in\T_{n,t}}
    E[u_{i,s}(Y^y_{i,s})|\mathcal{F}_t], \quad y\in\mathbb{R}, 
\end{equation*}
becomes maximal at $y=0$ almost surely. The condition $f^{\prime}(0)=0$ implies 
\begin{equation*}
 E[(u^{\prime}_{k,\tau}(X_{k,\tau})-u^{\prime}_{j,r}(X_{j,r}))1_A]=0, 
\end{equation*}
which leads to 
$u^{\prime}_{k,\tau}(X_{k,\tau})=u^{\prime}_{j,\tau}(X_{j,\tau})$ 
and to the martingale property of $(u^{\prime}_{k,s}(X_{k,s}))$. 
\end{proof}

We consider the reduction of 
the maximization problem (\ref{eq:2.1}) to the case $n=1$. 
To this end, we define the sup-convolution $u^{(n)}_s$ 
of $(u_{i,s})_{i=1,\dots,n}$ by 
\begin{equation}
\label{eq:2.9.5}
 u^{(n)}_s(x)=\sup\left\{\sum_{i=1}^n u_{i,s}(x_i) : 
 x=\sum_{i=1}^n x_i\right\}, \quad x\in\mathbb{R}. 
\end{equation}
We exhibit basic properties of $u_s^{(n)}$.  
\begin{prop}
\label{prop:2.16}
The sup-convolution 
$u^{(n)}_{s}:\mathbb{R}\to\mathbb{R}$ is also 
a strictly increasing, strictly concave function of class $C^1$, satisfying 
\begin{equation*}
 u^{(n)}_{s}(0)=0, \;\; (u_{s}^{(n)})^{\prime}(0)=1, \;\; 
 (u_{s}^{(n)})^{\prime}(+\infty)=0, 
 \;\; (u_{s}^{(n)})^{\prime}(-\infty)=\infty, \quad s=0,\dots,T. 
\end{equation*}
Moreover, if $I_{i,s}$ denotes the inverse function of 
$u_{i,s}^{\prime}$, $(i,s)\in\T_{n,t}$, then 
\begin{equation}
\label{eq:2.8} 
 u_s^{(n)}(x)=\ts\sum_{i=1}^n u_{i,s}\left(I_{i,s}
  \left(\ts\sum_{j=1}^nI_{j,s}\right)^{-1}(x)\right), \quad x\in\mathbb{R}. 
\end{equation}
\end{prop}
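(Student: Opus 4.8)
The plan is to establish the claimed properties of the sup-convolution $u_s^{(n)}$ by reducing everything to the inf-convolution structure of the inverse marginal-utility functions. First I would fix $s$ and suppress it from the notation, writing $u_i := u_{i,s}$ and $I_i := I_{i,s}$ for the inverse of $u_i'$. Each $u_i$ is strictly increasing, strictly concave, $C^1$, with $u_i'$ a continuous strictly decreasing bijection from $\mathbb{R}$ onto $(0,\infty)$ (this is exactly the content of the Inada-type conditions $u_i'(+\infty)=0$, $u_i'(-\infty)=\infty$). Consequently each $I_i:(0,\infty)\to\mathbb{R}$ is a well-defined, continuous, strictly decreasing bijection. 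I would then observe that the optimization defining $u_s^{(n)}(x)$ in $(\ref{eq:2.9.5})$ is a finite-dimensional concave maximization over the hyperplane $\sum_i x_i = x$, and since each $u_i$ is strictly concave the maximizer is unique when it exists.

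The key analytic step is to solve the optimality condition via Lagrange multipliers. At an interior optimum the gradients must be proportional, which forces $u_i'(x_i) = \lambda$ for a single common multiplier $\lambda>0$ across all $i$; equivalently $x_i = I_i(\lambda)$. The constraint $\sum_i x_i = x$ becomes $\sum_{i=1}^n I_i(\lambda) = x$. Since each $I_i$ is a strictly decreasing continuous bijection of $(0,\infty)$ onto $\mathbb{R}$, the sum $\sum_j I_j$ is also a strictly decreasing continuous bijection of $(0,\infty)$ onto $\mathbb{R}$, hence invertible; I would write its inverse as $\left(\sum_j I_j\right)^{-1}$. Thus the unique optimal multiplier is $\lambda = \left(\sum_j I_j\right)^{-1}(x)$, the optimal components are $x_i = I_i(\lambda)$, and substituting back gives precisely formula $(\ref{eq:2.8})$. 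This simultaneously proves existence and uniqueness of the maximizer and the explicit representation.

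Next I would verify the regularity and normalization claims. From $(\ref{eq:2.8})$, differentiating through using the chain rule and the inverse-function theorem, I expect to find the clean identity $(u_s^{(n)})'(x) = \lambda = \left(\sum_j I_j\right)^{-1}(x)$; indeed this is the standard envelope-theorem fact that the derivative of a sup-convolution equals the common marginal utility at the optimum. Since $\left(\sum_j I_j\right)^{-1}$ is continuous and strictly positive, $u_s^{(n)}$ is $C^1$ and strictly increasing; since $\left(\sum_j I_j\right)^{-1}$ is strictly decreasing (being the inverse of a strictly decreasing function), $(u_s^{(n)})'$ is strictly decreasing, giving strict concavity. For the boundary behavior, as $x\to+\infty$ each $I_i(\lambda)\to+\infty$ forces $\lambda\to 0^+$, so $(u_s^{(n)})'(+\infty)=0$; as $x\to-\infty$ we get $\lambda\to+\infty$, so $(u_s^{(n)})'(-\infty)=\infty$. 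For the normalization, $u_i'(0)=1$ for all $i$ means $I_i(1)=0$, hence $\sum_j I_j(1)=0$ and thus $\left(\sum_j I_j\right)^{-1}(0)=1$, giving $(u_s^{(n)})'(0)=1$; and with $\lambda=1$ all $x_i=0$, so $(\ref{eq:2.8})$ yields $u_s^{(n)}(0)=\sum_i u_i(0)=0$.

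The main obstacle I anticipate is the rigorous justification that the supremum in $(\ref{eq:2.9.5})$ is genuinely attained at an interior point rather than escaping to the boundary of the domain, since the feasible set $\{\sum_i x_i = x\}$ is unbounded. Here I would use the Inada condition $u_i'(-\infty)=\infty$ together with $u_i'(+\infty)=0$: if any coordinate $x_i\to-\infty$ the marginal cost of transferring mass into it diverges, so pushing any single coordinate to $\pm\infty$ strictly decreases the objective, and a standard coercivity argument confines the maximizer to a compact set, yielding attainment. Once attainment at an interior point is secured, the Lagrange condition is both necessary and (by concavity) sufficient, and the remaining computations are the routine chain-rule and limit calculations sketched above.
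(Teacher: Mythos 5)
Your proof is correct and follows essentially the same route as the paper: both construct the explicit candidate $x_i = I_{i,s}\bigl(\bigl(\sum_{j=1}^n I_{j,s}\bigr)^{-1}(x)\bigr)$ and certify its optimality by the first-order concavity inequality $u_{i,s}(y_i)-u_{i,s}(x_i)\le u_{i,s}'(x_i)(y_i-x_i)$, whose right-hand side telescopes to zero because all the $u_{i,s}'(x_i)$ equal the common value $\bigl(\sum_j I_{j,s}\bigr)^{-1}(x)$. The coercivity/attainment concern in your final paragraph is therefore moot: since the candidate is written down explicitly and shown by concavity to dominate every feasible point, attainment is automatic and no compactness argument is needed.
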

\begin{proof}
For $x\in\mathbb{R}$, consider 
$x_{i}:=I_{i,s}((\sum_{j=1}^n I_{j,s})^{-1}(x))$.  
Then, $x=\sum_{i=1}^n x_{i}$. 
For any $(y_i)_{i=1,\dots,n}$ with 
$\sum_{i=1}^n y_i=x$, the concavity of $u_{i,s}$'s implies 
\begin{equation*}
 \ts\sum_{i=1}^n u_{i,s}(y_i)-\ts\sum_{i=1}^n u_{i,s}(x_{i})\le 
  \ts\sum_{i=1}^n u_{i,s}^{\prime}(x_i)(y_i-x_i)
  = (\ts\sum_{j=1}^nI_{i,s})^{-1}(x)\sum_{i=1}^n (y_i-x_i)
  =0, 
\end{equation*}
whence $(x_i)$ is optimal and (\ref{eq:2.8}) holds. 
From this, the other assertions follow easily. 
\end{proof}

\begin{prop}
\label{prop:2.17}
For $Z\in L^{\infty}_T$, the utility $U_{t}(Z)$ is given by 
\begin{equation}
\label{eq:2.9}
 U_{t}(Z)=\esssup_{(X_s)\in\mathcal{A}_{1,t}(Z)}
  \sum_{s=t}^T E[u^{(n)}_s(X_s)|\mathcal{F}_t]. 
\end{equation}
\end{prop}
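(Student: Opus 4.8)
The plan is to prove the asserted equality by establishing the two inequalities separately, in each case using the pointwise sup-convolution identity (\ref{eq:2.8}) of Proposition \ref{prop:2.16} to move between $n$-dimensional diversifications in $\mathcal{A}_{n,t}(Z)$ and one-dimensional diversifications in $\mathcal{A}_{1,t}(Z)$, and then invoking the monotonicity and linearity of conditional expectation.

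First I would show that $U_t(Z)$ is bounded above by the right-hand side of (\ref{eq:2.9}). Given an arbitrary $(X_{i,s})\in\mathcal{A}_{n,t}(Z)$, set $X_s:=\sum_{i=1}^n X_{i,s}$ for $s=t,\dots,T$. Then each $X_s\in L^{\infty}_s$ as a finite sum of $\mathcal{F}_s$-measurable bounded random variables, and $\sum_{s=t}^T X_s=\sum_{(i,s)\in\T_{n,t}}X_{i,s}=Z$, so $(X_s)\in\mathcal{A}_{1,t}(Z)$. Since $(X_{i,s})_{i=1}^n$ is one admissible decomposition of $X_s$, the definition (\ref{eq:2.9.5}) of $u^{(n)}_s$ gives the pointwise bound $\sum_{i=1}^n u_{i,s}(X_{i,s})\le u^{(n)}_s(X_s)$. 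Taking $E[\,\cdot\,|\mathcal{F}_t]$ and summing over $s$ yields $\sum_{(i,s)}E[u_{i,s}(X_{i,s})|\mathcal{F}_t]\le\sum_{s=t}^T E[u^{(n)}_s(X_s)|\mathcal{F}_t]$, and taking the essential supremum over all $(X_{i,s})$ produces the desired inequality.

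For the reverse inequality, I would start from an arbitrary $(X_s)\in\mathcal{A}_{1,t}(Z)$ and use the explicit optimizer from (\ref{eq:2.8}): define $X_{i,s}:=I_{i,s}\bigl((\sum_{j=1}^n I_{j,s})^{-1}(X_s)\bigr)$ for each $(i,s)\in\T_{n,t}$. The identity (\ref{eq:2.8}) then gives $\sum_{i=1}^n u_{i,s}(X_{i,s})=u^{(n)}_s(X_s)$ pointwise, and $\sum_{i=1}^n X_{i,s}=X_s$, so $\sum_{(i,s)}X_{i,s}=\sum_{s=t}^T X_s=Z$. Consequently $\sum_{(i,s)}E[u_{i,s}(X_{i,s})|\mathcal{F}_t]=\sum_{s=t}^T E[u^{(n)}_s(X_s)|\mathcal{F}_t]$, which, once admissibility of $(X_{i,s})$ is confirmed, shows $U_t(Z)\ge\sum_{s=t}^T E[u^{(n)}_s(X_s)|\mathcal{F}_t]$; taking the essential supremum over $(X_s)\in\mathcal{A}_{1,t}(Z)$ finishes this direction and hence the proof.

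The only genuine obstacle is the admissibility check in the reverse direction, namely that $(X_{i,s})\in\mathcal{A}_{n,t}(Z)$. Here I would argue as follows: since $u_{i,s}^{\prime}$ is continuous and strictly decreasing, each inverse $I_{i,s}$ is continuous, and so is $(\sum_{j=1}^n I_{j,s})^{-1}$; hence $x\mapsto I_{i,s}\bigl((\sum_{j=1}^n I_{j,s})^{-1}(x)\bigr)$ is a continuous deterministic map, which makes $X_{i,s}$ an $\mathcal{F}_s$-measurable function of $X_s$. Because $X_s\in L^{\infty}_s$ takes values in a compact interval $[-M,M]$ almost surely and a continuous function is bounded on $[-M,M]$, we obtain $X_{i,s}\in L^{\infty}_s$. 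Once this is settled, both inequalities follow immediately from the pointwise relation (\ref{eq:2.8}) and the monotonicity of conditional expectation.
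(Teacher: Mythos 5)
Your proof is correct and follows essentially the same route as the paper: the upper bound comes from collapsing any $(X_{i,s})\in\mathcal{A}_{n,t}(Z)$ to $X_s=\sum_i X_{i,s}$ and using the defining inequality of the sup-convolution, while the lower bound uses the explicit optimizer $X_{i,s}=I_{i,s}\bigl((\sum_j I_{j,s})^{-1}(X_s)\bigr)$ from Proposition \ref{prop:2.16}, exactly as in the paper's two-sided sandwich. Your added verification that these $X_{i,s}$ lie in $L^{\infty}_s$ (continuity of the deterministic map plus boundedness of $X_s$) fills in a step the paper leaves implicit.
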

\begin{proof}
For $(Y_s)\in\mathcal{A}_{1,t}(Z)$, put 
$X_{i,s}^Y:=I_{i,s}((\sum_{j=1}^n I_{i,s})^{-1}(Y_s))$. 
Then by Proposition \ref{prop:2.16}
\begin{align*}
 U_{t}(Z)&\le \esssup\left\{\sum_{s=t}^T
  E[u^{(n)}_s(Y_s)|\mathcal{F}_t] : (Y_s)\in\mathcal{A}_{1,t}(Z)\right\} 
  \\
 &= \esssup\left\{\sum_{s=t}^T\sum_{i=1}^n
  E[u_{i,s}(X^Y_{i,s})|\mathcal{F}_t] : (Y_s)\in\mathcal{A}_{1,t}(Z) 
  \right\} \le U_{t}(Z). 
\end{align*}
Thus (\ref{eq:2.9}) follows. 
\end{proof}

In the rest of this section, we denote $u_s^{(n)}$ by $u_s$ for simplicity. 
By Theorem \ref{thm:2.15} and Proposition \ref{prop:2.17}, 
we have the following: 
\begin{cor}
\label{cor:2.18}
 A necessary and sufficient condition for 
$(X_{s})\in\mathcal{A}_{1,t}$ to be the maximizer for 
 $(\ref{eq:2.9})$ is that 
 $(u_{s}^{\prime}(X_{s}))_{s=t}^T$ is a martingale. 
\end{cor}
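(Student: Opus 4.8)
The plan is to recognize the maximization in (\ref{eq:2.9}) as nothing but the original sup-convolution problem (\ref{eq:2.1}) in the degenerate case of a single risk index, and then to quote Theorem \ref{thm:2.15} directly. Indeed, a diversification $(X_s)\in\mathcal{A}_{1,t}(Z)$ is precisely an element of the allocation set with the risk index $i$ restricted to a single value, so (\ref{eq:2.9}) is the problem (\ref{eq:2.1}) built from the single family of one-dimensional utilities $u_s:=u^{(n)}_s$, $s=t,\dots,T$. Proposition \ref{prop:2.17} is exactly what licenses this reduction: it identifies the value of (\ref{eq:2.9}) with $U_t(Z)$, and hence guarantees that optimality for the reduced problem is optimality for the original one.

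First I would check that this single family meets the standing hypotheses imposed on utility functions, so that Theorem \ref{thm:2.15} is genuinely applicable and need not be reproved. This is the content of Proposition \ref{prop:2.16}: each $u_s^{(n)}$ is strictly increasing, strictly concave and of class $C^1$, with $u_s^{(n)}(0)=0$, $(u_s^{(n)})'(0)=1$, $(u_s^{(n)})'(+\infty)=0$ and $(u_s^{(n)})'(-\infty)=\infty$. In particular, the first-order perturbation argument in the proof of Theorem \ref{thm:2.15}—the one that forms the admissible variation $(Y^y_{i,s})$ and differentiates the associated random function at $y=0$—carries over verbatim with $u_s^{(n)}$ in place of the $u_{i,s}$, since it uses only strict concavity and the $C^1$ property.

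With applicability secured, I would invoke Theorem \ref{thm:2.15} with the number of risk indices taken to be $1$. Its optimality criterion has two parts: that $(u_{i,s}'(X_{i,s}))_{s=t}^T$ be independent of the risk index $i$, and that the resulting common process be an $(\mathcal{F}_s)$-martingale. When there is only one risk index the first clause is vacuous, so the characterization collapses to the single requirement that $(u_s'(X_s))_{s=t}^T$ be a martingale, which is the assertion of the corollary. I do not expect a serious obstacle here; the result is a clean specialization, and the one point that genuinely needs care—supplied entirely by Proposition \ref{prop:2.16}—is the verification that the sup-convolved utilities $u_s^{(n)}$ retain all the structural properties of the $u_{i,s}$, so that the reduced problem falls within the scope of Theorem \ref{thm:2.15}.
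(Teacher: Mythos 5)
Your proposal is correct and follows exactly the route the paper takes: the corollary is stated there as an immediate consequence of Theorem \ref{thm:2.15} and Proposition \ref{prop:2.17}, i.e.\ as the $n=1$ specialization in which the index-independence clause is vacuous. Your additional step of invoking Proposition \ref{prop:2.16} to confirm that the sup-convolutions $u_s^{(n)}$ satisfy the standing hypotheses is a sensible (and implicit in the paper) piece of due diligence, not a departure.
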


From the theorem above, the problem is now reduced to that of 
finding $(X_{s})\in\mathcal{A}_{1,t}(Z)$ 
such that $(u_{s}^{\prime}(X_{s}))_{s=t}^T$ is a martingale.
We adopt a duality approach to this problem. 
Define the function $u^{*}_s: [0,\infty)\to [0,\infty]$ by 
\begin{equation*}
 u^{*}_s(y)=\sup_{x\in\mathbb{R}}\{u_s(x)-xy\}. 
\end{equation*}
Then, for a positive martingale $(M_s)_{s=t}^T$, we see that 
\begin{equation*}
 \sum_{s=t}^T E[u_s(X_s)|\mathcal{F}_t]
  - \sum_{s=t}^T E[M_sX_s|\mathcal{F}_t]
  \le \sum_{s=t}^T E[u^{*}_s(M_s)|\mathcal{F}_t]. 
\end{equation*}
Since $(X_s)_{s=t}^T\in\mathcal{A}_{1,t}(Z)$ and 
$(M_s)$ is a martingale, 
the second term on the left-hand side
can be written as $E[M_TZ|\mathcal{F}_t]$. 
In view of this observation, 
denoting by $\mathcal{M}_t$ the set of all positive martingales 
$(M_s)_{s=t}^T$, we obtain, 
for $(X_s)\in\mathcal{A}_{1,t}(Z)$ 
and $(M_s)_{s=t}^T\in\mathcal{M}_t$, 
\begin{equation}
\label{eq:2.10}
  \sum_{s=t}^T E[u_s(X_s)|\mathcal{F}_t]
  \le \sum_{s=t}^T E[u^{*}_s(M_s)|\mathcal{F}_t] + E[M_TZ|\mathcal{F}_t]. 
\end{equation}
Thus we are led to the following dual problem: 
\begin{equation}
\label{eq:2.11}
  \essinf_{(M_s)\in\mathcal{M}_t} 
  \left\{\sum_{s=t}^T E[u^{*}_s(M_s)|\mathcal{F}_t] 
   + E[M_TZ|\mathcal{F}_t] \right\}. 
\end{equation}

\begin{thm}
\label{thm:2.19}
There exists a unique $(M_s)\in\mathcal{M}_t$ that attains the 
essential infimum in $(\ref{eq:2.11})$. 
\end{thm}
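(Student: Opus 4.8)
The plan is to read (\ref{eq:2.11}) as a convex minimization of the functional
$J(M):=\sum_{s=t}^T E[u^{*}_s(M_s)|\mathcal{F}_t]+E[M_TZ|\mathcal{F}_t]$ over the convex set $\mathcal{M}_t$, obtaining existence by the direct method and uniqueness from strict convexity. First I would record the properties of the conjugates. Since each $u_s$ is strictly concave, strictly increasing and $C^1$ with $u_s'(0)=1$, $u_s'(+\infty)=0$, $u_s'(-\infty)=\infty$, a short computation shows that $u^{*}_s$ is a nonnegative, strictly convex, $C^1$ function on $(0,\infty)$ with $u^{*}_s(1)=0$ and $(u^{*}_s)'=-I_s$; hence $(u^{*}_s)'(0+)=-\infty$ and $(u^{*}_s)'(+\infty)=+\infty$, so $u^{*}_s$ is \emph{superlinear} at infinity and \emph{steep} at the origin. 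These two features are exactly what drive compactness and strict positivity below.

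Next I would reduce the conditional, $\mathcal{F}_t$-valued problem to a scalar one. The family $\{J(M):M\in\mathcal{M}_t\}$ is stable under pasting on $\mathcal{F}_t$-measurable sets (switching two martingales on $A\in\mathcal{F}_t$ again gives a martingale), hence is downward directed, so there is a sequence $(M^{(k)})$ with $J(M^{(k)})\downarrow V:=\essinf_{M\in\mathcal{M}_t}J(M)$ a.s. Taking expectations and using monotone convergence gives $E[V]=\inf_M E[J(M)]$, and any $M^{*}$ minimizing the scalar functional $E[J(\cdot)]$ automatically satisfies $J(M^{*})=V$ a.s., because $J(M^{*})\ge V$ a.s.\ with equal expectation. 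Thus it suffices to minimize $E[J(\cdot)]$, which I regard as a convex functional of the terminal value $M_T\in L^1$ (a martingale being determined by $M_T$ via $M_s=E[M_T|\mathcal{F}_s]$).

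For existence I run the direct method over the weakly closed convex set of \emph{nonnegative} martingales, checking that enlarging from $\mathcal{M}_t$ does not change the infimum. Testing against the constant martingale $1$ gives $E[J(1)]=E[Z]<\infty$, so along a minimizing sequence $E[J(M^{(k)})]$ is bounded; combining $u^{*}_s\ge0$, the estimate $|E[M^{(k)}_TZ]|\le\|Z\|_\infty E[M^{(k)}_T]$, and the superlinearity of $u^{*}_T$ yields a uniform bound first on $E[M^{(k)}_T]$ and then on $E[u^{*}_T(M^{(k)}_T)]$. By the de la Vall\'ee--Poussin criterion $\{M^{(k)}_T\}$ is uniformly integrable, hence relatively weakly compact in $L^1$ by Dunford--Pettis; I pass to a weak limit $M^{*}_T\ge0$ and set $M^{*}_s=E[M^{*}_T|\mathcal{F}_s]$. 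Weak lower semicontinuity of $E[J(\cdot)]$ (convexity plus Fatou, with the term $E[M_TZ]$ weakly continuous as $Z\in L^\infty$) shows that $M^{*}$ attains the infimum among nonnegative martingales.

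The main obstacle is to upgrade $M^{*}_T\ge0$ to $M^{*}_T>0$, so that $M^{*}\in\mathcal{M}_t$ and not merely in its closure; here the steepness $(u^{*}_T)'(0+)=-\infty$ is decisive. If $P(M^{*}_T=0)>0$, perturbing upward by $\varepsilon 1_{\{M^{*}_T=0\}}$ keeps the martingale nonnegative and lowers the $s=T$ contribution by $P(M^{*}_T=0)\int_0^\varepsilon I_T(r)\,dr$, a quantity of order larger than $\varepsilon$ since $\tfrac1\varepsilon\int_0^\varepsilon I_T(r)\,dr\ge I_T(\varepsilon)\to\infty$; this strictly dominates the $O(\varepsilon)$ changes of the remaining smooth terms, contradicting minimality. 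Hence $M^{*}_T>0$ a.s.\ and $M^{*}\in\mathcal{M}_t$. Uniqueness then follows from the strict convexity of $u^{*}_T$: two distinct minimizers would have terminal values differing on a set of positive probability, and their average would make $J$ strictly smaller than $V$ there, which is impossible. (As a consistency check, the Euler--Lagrange equation of (\ref{eq:2.11}) reads $\sum_{s=t}^T I_s(M_s)=Z$, so $X_s:=I_s(M_s)$ recovers the optimal diversification of Corollary~\ref{cor:2.18}.)
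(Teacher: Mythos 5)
Your proposal is correct in substance but reaches the existence part by a genuinely different compactness mechanism than the paper. The reduction of the $\mathcal{F}_t$-conditional problem to a scalar one via pasting/downward-directedness is exactly the paper's first step (Neveu's Proposition VI-1-1), and both arguments rest on the same two analytic facts: superlinearity of $u^{*}_s$ at infinity (the paper's Lemma \ref{lem:2.20}) to get uniform integrability of a minimizing sequence via de la Vall\'ee--Poussin, and steepness $I_s(0+)=+\infty$ to upgrade the nonnegative limit to a strictly positive one by a first-order perturbation. Where you diverge is in extracting the limit: the paper keeps the whole vector $(M^{(m)}_t,\dots,M^{(m)}_T)$ and applies a multidimensional Koml\'os theorem to get a.s.\ convergence of Ces\`aro means, after which it must separately argue $L^1$-convergence (via convexity and the $u^{*}_s$-bounds) to see that the limit is still a martingale; you instead parametrize $\mathcal{M}_t$ by the terminal value $M_T\in L^1$, use Dunford--Pettis to pass to a weak limit, and recover the martingale structure for free as $M^{*}_s=E[M^{*}_T|\mathcal{F}_s]$, at the cost of having to invoke weak lower semicontinuity of the convex integral functionals (Mazur/Fatou). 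Your route is arguably cleaner on the martingale-preservation point and avoids the iterated-subsequence construction of the paper's Remark \ref{rem:2.21}; the paper's route avoids any appeal to weak topologies. One point to tighten in your positivity step: the claim that the non-degenerate terms change only by $O(\varepsilon)$ requires an integrable bound on the difference quotients $\varepsilon^{-1}\bigl(u^{*}_s(M^{*}_s+\varepsilon P(M^{*}_T=0|\mathcal{F}_s))-u^{*}_s(M^{*}_s)\bigr)$, e.g.\ via monotonicity of convex difference quotients; the paper's own proof glosses over the analogous interchange, so this is a shared, minor looseness rather than a defect of your approach.
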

\begin{proof}
The uniqueness follows easily from the strict convexity of $u_s^{*}(y)$. 

To prove the existence, set 
\begin{equation*}
 \Psi(M)_t:=\sum_{s=t}^T E[u_s^{*}(M_s)|\mathcal{F}_t] 
  + E[M_TZ|\mathcal{F}_t], \quad M\in\mathcal{M}_t. 
\end{equation*}
The family $\{\Psi(M)_t\}_{M\in\mathcal{M}_t}$ is closed under 
pairwise minimization, i.e., 
\begin{equation*}
 M^1,M^2\in\mathcal{M}\;\;\Rightarrow\;\; 
 \min\{\Psi(M^1)_t,\Psi(M^2)_t\}\in\{\Psi(M)_t\}_{M\in\mathcal{M}_t}. 
\end{equation*}
In fact, for any $M^1,M^2\in\mathcal{M}_t$, we 
put $A=\{\Psi(M^1)_t\le\Psi(M^2)_t\}\in\mathcal{F}_t$, 
and consider $L\in\mathcal{M}_t$ defined by 
$L_s=M_s^1 1_{A}+M_s^2 1_{A^c}$. Then it is easy to 
see that $\Psi(L)_t=\Psi(M^1)_t 1_{A} + \Psi(M^2)_t 1_{A^c}
 =\min\{\Psi(M^1)_t,\Psi(M^2)_t\}$.  

Thus, from Neveu \cite[Proposition VI-1-1]{Ne}, 
there exists a sequence $(M_s^{(m)})\in\mathcal{M}$ such that 
\begin{equation*}
 \lim_{m\to\infty}
  \left\{\sum_{s=t}^T E[u^{*}_s(M_s^{(m)})|\mathcal{F}_t] 
   + E[M_T^{(m)} Z|\mathcal{F}_t] \right\}
  = \essinf_{(M_s)\in\mathcal{M}_t}\Psi(M)_t, \quad\text{a.s.},  
\end{equation*}
where the convergence is monotone nonincreasing. 
This and the monotone convergence theorem give
\begin{equation*}
 \lim_{m\to\infty}E[\Psi(M^{(m)})_t]
 =E\left[\essinf_{(M_s)\in\mathcal{M}_t}\Psi(M)_t\right]
 \le E(Z)<\infty. 
\end{equation*}
Hence we find that for $s=t,\dots,T$, 
\begin{equation*}
 \sup_{m}E[u^{*}_s(M_s^{(m)})]<+\infty. 
\end{equation*}
From this and Lemma \ref{lem:2.20} below, 
the sequence $(M_s^{(m)})_{m=1}^{\infty}$, 
$s=t,\dots,T$ is uniformly integrable 
(in particular, bounded in $L^1(\Omega,\mathcal{F}_T,P)$). 
Thus a multidimensional version of Koml\'{o}s's theorem 
(see Koml\'{o}s \cite{K} and Remark \ref{rem:2.21} below) 
implies that there exist 
a subsequence $(M_s^{(m_k)})$ and $(\tilde{M}_s)$ such that 
\begin{equation}
\label{eq:2.12}
 \tilde{M}_s=\lim_{k\to\infty}\frac{M_s^{(m_1)}+\dots +M_s^{(m_k)}}{k}, 
 \quad s=t,\dots,T, \;\;\text{a.s.}
\end{equation}

Since $u^{*}_s(\cdot)$ is convex, we have for every $s=t,\dots,T$, 
\begin{equation*}
 \sup_{k}E\left[u_s^{*}\left(\frac{M_s^{(m_1)}+\dots +M_s^{(m_k)}}{k}
  \right)\right]\le \sup_{k}E\left[
  \frac{u^{*}_s(M_s^{(m_1)})+\dots +u^{*}_s(M_s^{(m_k)})}{k}\right]
 < +\infty. 
\end{equation*}
Hence the convergence in (\ref{eq:2.12}) also occurs in $L^1(\Omega,\mathcal{F}_s,P)$, 
which implies that $(\tilde{M}_s)$ is a nonnegative martingale. 
Using Fatou's lemma, we have 
\begin{align*}
 \sum_{s=t}^TE[u^{*}_s(\tilde{M}_s)|\mathcal{F}_t] 
  + E[\tilde{M}_TZ|\mathcal{F}_t]
 &\le \lim_{k\to\infty}\frac{1}{k}\sum_{i=1}^k 
  \left\{\sum_{s=t}^TE[u^{*}_s(M_s^{(m_i)})|\mathcal{F}_t]
 + E[M_T^{(m_i)}Z|\mathcal{F}_t] \right\} \\
 &= \essinf_{(M_s)\in\mathcal{M}_t}\Psi(M)_t. 
\end{align*}
Therefore, to complete the proof, it suffices to show 
that $\tilde{M}_s$ is actually positive. To this end, 
we observe that for $L_s\equiv 1$, 
\begin{align*}
 0&\le \lim_{\varepsilon\downarrow 0}\frac{
   \Psi_t((1-\varepsilon)\tilde{M}+\varepsilon L)-\Psi_t(\tilde{M})}
    {\varepsilon} \\
  &= \lim_{\varepsilon\downarrow 0}\frac{1}{\varepsilon}
   \sum_{s=t}^T E[u_s^*((1-\varepsilon)\tilde{M}_s+\varepsilon)
    - u_s^*(\tilde{M}_s)|\mathcal{F}_t] 
   + E[Z(1-\tilde{M}_T)|\mathcal{F}_t] \\ 
  &=\sum_{s=t}^T E[-I_s(\tilde{M}_s)(1-\tilde{M}_s)|\mathcal{F}_t]
  + E[Z(1-\tilde{M}_s)|\mathcal{F}_t],   
\end{align*}
where $I_s=(u_s^{\prime})^{-1}$. 
This together with 
\begin{equation*}
 0\le u_s^{*}(\tilde{M}_s)= u_s^{*}(\tilde{M}_s)-u_s^*(1)
 \le -(u_s^{*})^{\prime}(M_s)(1-M_s) 
 =I_s(\tilde{M}_s)(1-\tilde{M}_s)
\end{equation*}
gives
\begin{equation*}
 0\le E[I_s(\tilde{M}_s)(1-\tilde{M}_s)1_{\{\tilde{M}_s=0\}}]
 \le E[I_s(\tilde{M}_s)(1-\tilde{M}_s)]< \infty. 
\end{equation*}
However, since $I_s(0)=+\infty$, we have $\tilde{M}_s>0$. 
\end{proof}

In the proof above, we have used the following lemma: 
\begin{lem}
\label{lem:2.20}
It holds that 
\begin{equation*}
 \lim_{y\to\infty}\frac{u^{*}_s(y)}{y}= +\infty, \quad s=0,\dots,T. 
\end{equation*}
\end{lem}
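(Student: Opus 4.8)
The plan is to exploit only the defining supremum
\[
u_s^*(y)=\sup_{x\in\mathbb{R}}\{u_s(x)-xy\},
\]
bounding it below by a single affine function and then optimizing the choice of evaluation point. Concretely, I would fix an arbitrary $x_0\in\mathbb{R}$. Since the supremum dominates the value of the bracket at $x=x_0$, we have
\[
u_s^*(y)\ge u_s(x_0)-x_0\,y,\qquad y>0,
\]
and hence, dividing by $y>0$,
\[
\frac{u_s^*(y)}{y}\ge \frac{u_s(x_0)}{y}-x_0 .
\]

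First I would let $y\to\infty$ with $x_0$ held fixed. Because $u_s(x_0)$ is a finite real number (recall $u_s=u_s^{(n)}$ maps $\mathbb{R}$ into $\mathbb{R}$ by Proposition \ref{prop:2.16}), the term $u_s(x_0)/y$ vanishes, so
\[
\liminf_{y\to\infty}\frac{u_s^*(y)}{y}\ge -x_0 .
\]
Since this lower bound holds for every $x_0\in\mathbb{R}$, I would then let $x_0\to-\infty$; the right-hand side $-x_0$ tends to $+\infty$, forcing
\[
\liminf_{y\to\infty}\frac{u_s^*(y)}{y}=+\infty,
\]
which is exactly the assertion.

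The noteworthy feature of this route is that it uses neither concavity nor the boundary conditions on $u_s'$: the sole ingredient is that $u_s$ is finite at arbitrarily negative arguments, which is precisely what forces $u_s^*(y)$ to grow superlinearly. In effect there is no real obstacle here; the only point deserving a moment's care is the legitimacy of the two-stage limit (first $y\to\infty$ for each fixed $x_0$, then $x_0\to-\infty$), and this is justified because the intermediate inequality $\liminf_{y\to\infty}u_s^*(y)/y\ge -x_0$ is valid for every individual $x_0$, so one may take the supremum over $x_0$ afterwards. An alternative, more computational route would evaluate the conjugate at its maximizer $x=I_s(y)\to-\infty$ and analyze $u_s^*(y)/y=u_s(I_s(y))/u_s'(I_s(y))-I_s(y)$; this, however, requires controlling the indeterminate ratio $u_s(x)/u_s'(x)$ as $x\to-\infty$ and is strictly harder than the lower-bound argument, so I would present the latter.
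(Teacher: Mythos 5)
Your proof is correct, and it takes a genuinely different route from the paper's. The paper works through the explicit representation $u_s^*(y)=u_s(I_s(y))-yI_s(y)$ with $(u_s^*)'(y)=-I_s(y)$, where $I_s=(u_s')^{-1}$, and then applies l'H\^opital's rule, using the boundary condition $u_s'(-\infty)=\infty$ (equivalently $I_s(+\infty)=-\infty$) from Proposition \ref{prop:2.16} to conclude $\lim_{y\to\infty}(u_s^*)'(y)=+\infty$. You instead use only the definition of $u_s^*$ as a pointwise supremum of the affine functions $y\mapsto u_s(x_0)-x_0y$: the bound $\liminf_{y\to\infty}u_s^*(y)/y\ge -x_0$ for each fixed $x_0$, followed by $x_0\to-\infty$, is the standard convex-duality fact that the conjugate of a function finite on an unbounded-below domain is superlinear. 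Your argument is more elementary and more robust --- it needs neither differentiability of $u_s^*$, nor the invertibility of $u_s'$, nor the Inada-type conditions, only that $u_s$ is real-valued at arbitrarily negative arguments (which Proposition \ref{prop:2.16} supplies), and the chain of inequalities remains valid in the extended reals even where $u_s^*$ might a priori be infinite. What the paper's computational route buys in exchange is the identity $(u_s^*)'(y)=-I_s(y)$ itself, which it reuses immediately afterwards (in the proofs of Theorems \ref{thm:2.19} and \ref{thm:2.22}), so the l'H\^opital step comes essentially for free in context. Both proofs are complete; yours is the cleaner self-contained argument for the lemma in isolation.
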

\begin{proof}
By Proposition \ref{prop:2.17}, we have $I_s(0+)=+\infty$, 
$I_s(+\infty)=-\infty$, and 
\begin{equation*}
 u_s^{*}(y)=u_s(I_s(y))-yI_s(y), \quad y>0. 
\end{equation*}
Moreover 
\begin{equation*}
 (u_s^{*})^{\prime}(y)=-I_s(y), \quad y>0. 
\end{equation*}
Since $u_s^{*}$ is convex and $I_s(+\infty)=-\infty$, we find that 
$\lim_{y\to\infty}u_s^{*}(y)=+\infty$. 
Thus the lemma follows from de l'Hospital's theorem. 
\end{proof}

\begin{rem}
\label{rem:2.21} 
It is straightforward to extend Koml\'{o}s's theorem to a 
 multidimensional one. Indeed, applying \cite[Theorem 1]{K} for $s=t$, 
we take a subsequence $\{n_k^{t}\}\subset\{1,2,\dots\}$. 
Next applying \cite[Theorem 1]{K} for $s=t+1$ and $\{n_k^{t+1}\}$
we again choose a subsequence $\{n_k^{t+1}\}\subset\{n_k^{t}\}$. 
Repeating this procedure, we obtain a subsequence $\{n_k^{T}\}$ 
which, by \cite[Theorem 1a]{K}, satisfies the desired convergence property. 
\end{rem}

\begin{thm}
\label{thm:2.22}
Let $(X_s)_{s=t}^T$ and $(M_s)_{s=t}^T$ satisfy 
$X_s=I_s(M_s)$, $s=t,\dots,T$. 
Then for $Z\in L^{\infty}_T$, the following conditions are equivalent: 
\begin{enumerate}
 \item $(X_s)$ is in $\mathcal{A}_{1,t}(Z)$ 
  and attains the essential supremum in $(\ref{eq:2.9})$; 
 \item $(M_s)$ belongs to $\mathcal{M}_t$ with 
  $I_s(M_s)\in L^{\infty}_s$, $s=t,\dots,T$, and 
  is the minimizer for the problem $(\ref{eq:2.11})$. 
\end{enumerate}
Moreover, if one of $(i)$ and $(ii)$ holds, then 
\begin{equation*}
 U_{t}(Z)= 
 \essinf_{(M_s)\in\mathcal{M}_t} 
  \left\{\sum_{s=t}^T E[u^{*}_s(M_s)|\mathcal{F}_t] 
   + E[M_TZ|\mathcal{F}_t] \right\}.
\end{equation*}
\end{thm}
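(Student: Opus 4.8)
The plan is to exploit the Fenchel relation that already underlies the weak duality bound (\ref{eq:2.10}). The standing assumption $X_s=I_s(M_s)$, with $I_s=(u_s')^{-1}$, is equivalent to $M_s=u_s'(X_s)$, and this is precisely the equality case of the Fenchel--Young inequality, so that $u_s(X_s)=u^*_s(M_s)+M_sX_s$ for each $s=t,\dots,T$. I would record this identity first, since both implications turn on it, together with the martingale computation $\sum_{s=t}^T E[M_sX_s|\mathcal{F}_t]=E[M_T\sum_{s=t}^TX_s|\mathcal{F}_t]$, valid because $X_s\in L^\infty_s$ and $M_s=E[M_T|\mathcal{F}_s]$.

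For the direction $(i)\Rightarrow(ii)$, assume $(X_s)\in\mathcal{A}_{1,t}(Z)$ attains the supremum in (\ref{eq:2.9}). Then $M_s=u_s'(X_s)>0$ and, by Corollary \ref{cor:2.18}, $(M_s)$ is a martingale, hence $(M_s)\in\mathcal{M}_t$ with $I_s(M_s)=X_s\in L^\infty_s$. Summing the Fenchel identity, taking $E[\cdot|\mathcal{F}_t]$, and using $\sum_{s=t}^T X_s=Z$ in the martingale computation above, I get $\Psi(M)_t=\sum_{s=t}^T E[u_s(X_s)|\mathcal{F}_t]=U_t(Z)$. On the other hand, (\ref{eq:2.10}) together with (\ref{eq:2.9}) gives $U_t(Z)\le\Psi(M')_t$ for every $(M'_s)\in\mathcal{M}_t$. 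Combining, $(M_s)$ minimizes (\ref{eq:2.11}) and the asserted duality formula holds.

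For $(ii)\Rightarrow(i)$, set $X_s:=I_s(M_s)\in L^\infty_s$. The first key observation is that boundedness of $X_s$ forces $M_s$ to lie between the deterministic positive bounds $u_s'(\|X_s\|_\infty)$ and $u_s'(-\|X_s\|_\infty)$, because $I_s$ is strictly decreasing with $I_s(0+)=+\infty$; this uniform positivity is what makes two-sided perturbations admissible. For arbitrary $\eta\in L^\infty_T$, the process $\tilde\eta_s:=E[\eta|\mathcal{F}_s]$ is a bounded martingale, so $M+\varepsilon\tilde\eta\in\mathcal{M}_t$ for $|\varepsilon|$ small. Differentiating $\varepsilon\mapsto\Psi(M+\varepsilon\tilde\eta)_t$ at $\varepsilon=0$ (justified by conditional dominated convergence, using $(u^*_s)'=-I_s$ and the bounds just noted) and simplifying with $E[X_s\tilde\eta_s|\mathcal{F}_t]=E[X_s\eta|\mathcal{F}_t]$ yields the first-order condition $E[\eta(Z-\sum_{s=t}^T X_s)|\mathcal{F}_t]=0$. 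Since this holds for all $\eta$ and $\Psi(M)_t$ is the essential infimum, I conclude $\sum_{s=t}^T X_s=Z$, so $(X_s)\in\mathcal{A}_{1,t}(Z)$; as $(u_s'(X_s))=(M_s)$ is a martingale, Corollary \ref{cor:2.18} shows $(X_s)$ attains the supremum, and the duality formula follows as in the first direction.

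The main obstacle is the variational step in $(ii)\Rightarrow(i)$: extracting the constraint $\sum_{s=t}^T X_s=Z$ from minimality. The enabling fact --- that $I_s(M_s)\in L^\infty_s$ forces each $M_s$ to be bounded away from $0$ --- is exactly what legitimizes perturbing the minimizer in both directions inside $\mathcal{M}_t$ and differentiating under the conditional expectation; without it the positivity constraint could be active and only a one-sided inequality would survive.
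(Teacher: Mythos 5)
Your proposal is correct and follows essentially the same route as the paper: Fenchel--Young equality plus the weak duality bound (\ref{eq:2.10}) for $(i)\Rightarrow(ii)$ (the paper instead runs the equivalent subgradient inequality for $u_s^*$ directly), and a two-sided first-order perturbation inside $\mathcal{M}_t$ --- legitimized by $M_s\ge u_s'(K)>0$ --- for $(ii)\Rightarrow(i)$, where the paper uses the special martingales $P(A|\mathcal{F}_s)$ in place of your general $E[\eta|\mathcal{F}_s]$. Your version is, if anything, slightly more careful about the differentiation under the expectation.
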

\begin{proof}
Suppose that $(X_s)_{s=t}^T$ satisfies (i). 
For $(L_s)\in\mathcal{M}_t$, the convexity of $u_s^{*}$ gives 
\begin{align*}
 &\sum_{s=t}^T E[u^{*}_s(L_s)|\mathcal{F}_t] + E[L_TZ|\mathcal{F}_t]
 - \sum_{s=t}^T E[u^{*}_s(M_s)|\mathcal{F}_t] - E[M_TZ|\mathcal{F}_t] \\
 &\ge \sum_{s=t}^T E[(u^{*}_s)^{\prime}(M_s)
     (L_s-M_s)|\mathcal{F}_t] + E[(L_T-M_T)Z|\mathcal{F}_t]. 
\end{align*}
Since $(u_s^{*})^{\prime}(M_s)=-I_s(M_s)$, $(M_s)\in\mathcal{M}_t$ 
and $\sum_{s=t}^T I_s(M_s)=Z$, the right-hand side in the 
above inequality is equal to 
\begin{align*}
 & \sum_{s=t}^T E[-I_s(M_s)E[L_T-M_T | \mathcal{F}_s]|\mathcal{F}_t] 
    + E[(L_T-M_T)Z|\mathcal{F}_t] \\ 
 &= E\left[\left(Z-\sum_{s=t}^T I_s(M_s)\right)(L_T-M_T)
    \: \bigg|\: \mathcal{F}_t\right]
  = 0, 
\end{align*}
whence $(M_s)$ is a solution. 

Conversely, suppose $(M_s)$ satisfies (ii). 
Then, there exists $K>0$ such that $I_s(M_s)\le K$. Since 
$I_s(\cdot)$ is decreasing, we have $M_s\ge u^{\prime}_s(K)$. 
Thus, for some $\varepsilon>0$, 
\begin{equation*}
 M_s\ge\varepsilon, \quad s=t,\dots,T. 
\end{equation*}
Now, fix $A\in\mathcal{F}$ and define $(L_s^y)$ by 
\begin{equation*}
 L_s^y:=M_s + y P(A | \mathcal{F}_s), \quad y> -\varepsilon. 
\end{equation*}
Since $(L_s^y)\in\mathcal{M}_t$, the function 
\begin{equation*}
 f(y):=\sum_{s=t}^T E[u^{*}_s(L^y_s)|\mathcal{F}_t]
  + E[L_T^y Z|\mathcal{F}_t], \quad y> -\varepsilon, 
\end{equation*}
becomes minimal at $y=0$. Hence from $f^{\prime}(0)=0$, 
\begin{equation*}
 0=\sum_{s=t}^T E[-I_s(M_s)E[1_A | \mathcal{F}_s]] 
    + E[ZE[1_A | \mathcal{F}_T]] 
  = E\left(Z-\sum_{s=t}^T I_s(M_s)\right)1_A. 
\end{equation*}
Since $A\in\mathcal{F}$ is arbitrary, we have 
\begin{equation*}
 \sum_{s=t}^T X_s=\sum_{s=t}^T I_s(M_s)=Z. 
\end{equation*}
Moreover we find that $X_s=I_s(M_s)\in L^{\infty}_s$ and 
that $u_s^{\prime}(X_s)=M_s$. 
Thus by Corollary \ref{cor:2.18}, 
$(X_s)\in\mathcal{A}_{1,t}^{\infty}(Z)$ is the optimal solution 
to the problem (\ref{eq:2.9}). 

Finally, suppose one of the conditions (i) and (ii). Then from 
$u_s^{*}(y)=u_s(I_s(y))-yI_s(y)$ and $X_s=I_s(M_s)$, 
\begin{equation*}
 \sum_{s=t}^T E[u_s(X_s)|\mathcal{F}_t] 
 = \sum_{s=t}^T E[u^{*}_s(M_s)|\mathcal{F}_t] + E[M_TZ|\mathcal{F}_t]. 
\end{equation*}
Thus the desired equality follows. 
\end{proof}


\subsection{The case of exponential utilities}\label{sec:2.3}

In this section, to investigate $H_t$ in more details, 
we consider a class of 
exponential utility functions $(u_{i,s})_{(i,s)\in\T_{n,0}}$, 
each of which defined by 
\begin{equation}
\label{eq:2.13}
 u_{i,s}(x)=\frac{1}{\alpha_{i,s}}(1-e^{-\alpha_{i,s} x}), 
  \quad x\in\mathbb{R}, \;\; (i,s)\in\T_{n,0},  
\end{equation}
where $\alpha_{i,s}\in (0,\infty)$ for all $(i,s)\in\T_{n,0}$. 
In view of Proposition \ref{prop:2.16}, using the 
sup-convolution $u_s(x)=u_s^{(n)}(x)$ defined by (\ref{eq:2.9.5}), 
we may consider the case $n=1$. 
An elementary calculation shows that $u_s$ is again an 
exponential utility function given by 
\begin{equation*}
 u_s(x)=\frac{1}{\alpha_s}(1-e^{-\alpha_s x}), 
 \quad x\in\mathbb{R}, \;\; s=0,\dots,T, 
\end{equation*}
where $\alpha_s$ is defined by 
\begin{equation*}
 \frac{1}{\alpha_s}=\sum_{i=1}^s\frac{1}{\alpha_{i,s}}, 
 \quad s=0,\dots,T. 
\end{equation*}

Now, for $Z\in L^{\infty}_T$, we define 
the adapted process $(V_t(Z))_{t=0}^T$ by the backward iteration 
\begin{equation*}
\begin{cases}
 V_t(Z)=-\ds\frac{1}{\beta_{t+1}}
  \log E\left[\left. e^{-\beta_{t+1}V_{t+1}(Z)} 
   \right|\mathcal{F}_{t}\right], & t=0,\dots,T-1, \\
 V_T(Z)= Z, 
\end{cases}
\end{equation*}
as well as the adapted process 
$(\widehat{X}_s(Z))_{s=t}^T= (\widehat{X}_s^{(t)}(Z))_{s=t}^T$ by
\begin{equation}
\label{eq:2.16.5}
\begin{cases}
 \widehat{X}_s(Z)=\ds\frac{1}{\alpha_s}\left\{
   \beta_t V_t(Z)+\sum_{r=t+1}^{s}\beta_r(V_r(Z)-V_{r-1}(Z))\right\}, 
   & s=t+1,\dots,T,\\
 \widehat{X}_t(Z)= \ds\frac{\beta_t}{\alpha_t}V_t(Z),  
\end{cases}
\end{equation}
where $(\beta_t)_{t=0}^T$ is the modified risk aversion parameter 
defined by 
\begin{equation}
\label{eq:2.14}
 \frac{1}{\beta_t}=\sum_{s=t}^T\frac{1}{\alpha_s}. 
\end{equation}
We can now completely describe the optimizer for 
(\ref{eq:2.9}). 
\begin{thm}
\label{thm:2.23}
For $Z\in L^{\infty}_T$, the process 
$(\widehat{X}_s(Z))_{s=t}^T$ is a unique maximizer for the essential 
supremum in $(\ref{eq:2.9})$. 
Moreover $U_{t}(Z)$ is given by
\begin{equation*}
 U_{t}(Z)=\frac{1}{\beta_{t}}\{1-\exp\left(-\beta_t V_t(Z)\right)\}. 
\end{equation*}
\end{thm}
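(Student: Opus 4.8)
The plan is to invoke Corollary \ref{cor:2.18}, which reduces everything to checking that the candidate $(\widehat{X}_s(Z))_{s=t}^T$ lies in $\mathcal{A}_{1,t}(Z)$ and that $(u_s'(\widehat{X}_s(Z)))_{s=t}^T$ is a martingale. For the exponential utility $u_s(x)=\frac{1}{\alpha_s}(1-e^{-\alpha_s x})$ we have $u_s'(x)=e^{-\alpha_s x}$, so, writing $W_s:=\alpha_s\widehat{X}_s(Z)$, the martingale condition is exactly that $(e^{-W_s})_{s=t}^T$ is a martingale. From (\ref{eq:2.16.5}) I would first record the two identities $W_t=\beta_t V_t(Z)$ and, for $s\ge t+1$,
\[
 W_s-W_{s-1}=\beta_s\bigl(V_s(Z)-V_{s-1}(Z)\bigr).
\]

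The martingale property then follows at once from the backward recursion defining $V_t(Z)$. Conditioning on $\mathcal{F}_{s-1}$ and using that $W_{s-1}$ and $V_{s-1}(Z)$ are $\mathcal{F}_{s-1}$-measurable,
\[
 E[e^{-W_s}\,|\,\mathcal{F}_{s-1}]
 =e^{-W_{s-1}}e^{\beta_s V_{s-1}(Z)}\,E[e^{-\beta_s V_s(Z)}\,|\,\mathcal{F}_{s-1}]
 =e^{-W_{s-1}},
\]
where the last step is precisely the defining relation $V_{s-1}(Z)=-\frac{1}{\beta_s}\log E[e^{-\beta_s V_s(Z)}\,|\,\mathcal{F}_{s-1}]$.

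Next, to confirm $(\widehat{X}_s(Z))\in\mathcal{A}_{1,t}(Z)$ I would compute $\sum_{s=t}^T\widehat{X}_s=\sum_{s=t}^T W_s/\alpha_s$ by substituting $W_s=\beta_t V_t+\sum_{r=t+1}^s\beta_r(V_r-V_{r-1})$, separating the constant term $\beta_t V_t$ and interchanging the order of the resulting double sum. The two key identities from (\ref{eq:2.14}) are $\sum_{s=t}^T 1/\alpha_s=1/\beta_t$ and $\sum_{s=r}^T 1/\alpha_s=1/\beta_r$: the first turns the constant term into $V_t(Z)$, while after the interchange the second collapses the increment sum into the telescoping $\sum_{r=t+1}^T(V_r-V_{r-1})=V_T-V_t=Z-V_t(Z)$, so the total is exactly $Z$. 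Boundedness $\widehat{X}_s\in L^{\infty}_s$ is inherited from $Z\in L^{\infty}_T$ together with the fact that each conditional certainty equivalent $V_r(Z)$ lies between the conditional essential infimum and supremum of $V_{r+1}(Z)$ and is therefore bounded. With both conditions verified, Corollary \ref{cor:2.18} shows $(\widehat{X}_s(Z))$ is a maximizer for (\ref{eq:2.9}), and uniqueness follows from the strict concavity of the $u_s$ (equivalently, from the uniqueness in Theorem \ref{thm:2.19} via Theorem \ref{thm:2.22}).

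Finally, for the value I would use $U_t(Z)=\sum_{s=t}^T E[u_s(\widehat{X}_s)|\mathcal{F}_t]$ with $u_s(\widehat{X}_s)=\frac{1}{\alpha_s}(1-e^{-W_s})$, giving
\[
 U_t(Z)=\sum_{s=t}^T\frac{1}{\alpha_s}
  -\sum_{s=t}^T\frac{1}{\alpha_s}E[e^{-W_s}|\mathcal{F}_t].
\]
The martingale property forces $E[e^{-W_s}|\mathcal{F}_t]=e^{-W_t}=e^{-\beta_t V_t(Z)}$ for every $s\ge t$, and $\sum_{s=t}^T 1/\alpha_s=1/\beta_t$, whence $U_t(Z)=\frac{1}{\beta_t}(1-e^{-\beta_t V_t(Z)})$. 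I expect the only real obstacle to be the summation-interchange bookkeeping establishing $\sum_s\widehat{X}_s=Z$; the martingale check and the final evaluation are immediate once the increments $W_s-W_{s-1}$ are identified.
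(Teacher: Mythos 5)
Your proposal is correct and follows essentially the same route as the paper: both verify via Corollary \ref{cor:2.18} that $(u_s'(\widehat{X}_s))$ is a martingale using the backward recursion for $V_t(Z)$, check $\sum_s\widehat{X}_s=Z$ by the same telescoping/interchange (the paper writes it multiplicatively as $\prod_{r}\xi_r^{-1/\beta_r}=\prod_s\prod_{r\le s}\xi_r^{-1/\alpha_s}=e^Z$, you write it additively in $W_s$), and obtain uniqueness and the value of $U_t(Z)$ in the same way. The only difference is presentational: the paper routes the argument through the multiplicative representation $M_s=\prod_{r=t}^s\xi_r$ of positive martingales, whereas you work directly with the increments $W_s-W_{s-1}=\beta_s(V_s-V_{s-1})$, which is marginally more self-contained.
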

\begin{proof}
From Corollary \ref{cor:2.18} and $u_s^{\prime}(x)=e^{-\alpha_s x}$, 
our task is to find a positive martingale $(M_s)$ such that 
\begin{equation}
\label{eq:2.15}
 \prod_{s=t}^T M_s^{-1/\alpha_s}=e^Z. 
\end{equation}
In fact, from $(M_s)$ we obtain a desired solution 
$X_s:=(-1/\alpha_s)\log M_s$. 
Every positive martingale $M_s$ is represented as 
$M_s=\prod_{r=t}^s \xi_r$, where $(\xi_r)$ is positive and adapted, 
and satisfies 
$E(\xi_r|\mathcal{F}_{r-1})=1$ for $r\ge t+1$. Using this representation, 
we can write the condition (\ref{eq:2.15}) as 
\begin{equation}
\label{eq:2.16}
 \prod_{r=t}^T \xi_r^{-1/\beta_r}
 =\prod_{r=t}^T \prod_{s=r}^T \xi_r^{-1/\alpha_s}
 =\prod_{s=t}^T\prod_{r=t}^s \xi_r^{-1/\alpha_s} = e^Z. 
\end{equation}
However, if we put 
\begin{equation*}
 \xi_t:=e^{-\beta_t V_t}, \quad 
 \xi_s:=\exp(-\beta_s(V_s(Z)-V_{s-1}(Z))), \quad s=t+1,\dots,T, 
\end{equation*}
then we see that $(\xi_s)$ satisfies 
$E(\xi_s|\mathcal{F}_{s-1})=1$, $s\ge t+1$, and (\ref{eq:2.16}). 
Hence, $\widehat{M}_s:=\prod_{r=t}^s\xi_s$ satisfies (\ref{eq:2.15}).  
Now we find that $\widehat{X}_s$ in (\ref{eq:2.16.5}) is written as 
\begin{equation*}
 \alpha_s\widehat{X}_s= - \sum_{r=t}^s\beta_r\log \xi_s 
 = - \log\prod_{r=t}^s \xi_s = -\log\widehat{M}_s. 
\end{equation*}
Thus $\widehat{X}_s$ is optimal. 
The uniqueness follows from $\widehat{X}_s=I_s(\widehat{M}_s)$ and 
Theorems \ref{thm:2.19} and \ref{thm:2.22}. 
Finally, we have
\begin{equation*}
 U_{t}(Z)=\sum_{s=t}^T\frac{1}{\alpha_s}-\sum_{s=t}^T
  \frac{1}{\alpha_s}E(e^{-\alpha_s\widehat{X}_s}|\mathcal{F}_t) 
 =\frac{1}{\beta_t}\left(1-e^{-\alpha_t\widehat{X}_t}\right)
 =\frac{1}{\beta_t}\left(1-e^{-\beta_tV_t(Z)}\right), 
\end{equation*}
as desired. 
\end{proof}

Next, we turn to the indifference premium $H_{t}(Z)$. 
Set 
\begin{equation*}
\mathcal{M}_t^0=\left\{(M_s)_{s=t}^T : 
 \text{positive martingale}, \; M_t=1\right\}. 
\end{equation*}
\begin{thm}
\label{thm:2.24}
The indifference premium $H_{t}(Z)$ of 
$Z\in L^{\infty}_T$ is determined by 
the backward iteration 
\begin{equation}
\label{eq:2.17}
\begin{cases}
 H_{t}(Z)=\ds\frac{1}{\beta_{t+1}}
  \log E\left[\left. e^{\beta_{t+1}H_{t+1}(Z)} 
   \right|\mathcal{F}_t\right], & t=0,\dots,T-1, \\
 H_{T}(Z)= Z,  
\end{cases}
\end{equation} 
and $H_{t}(Z)$ is represented as 
\begin{equation}
\label{eq:2.18}
  H_{t}(Z)=\esssup_{(M_s)\in\mathcal{M}_t^0} 
   \left\{E[M_TZ|\mathcal{F}_t] - 
    \sum_{s=t}^T \frac{1}{a_s} E[M_s\log M_s|\mathcal{F}_t]\right\}, 
  \;\; t=0,\dots, T.  
\end{equation}
Moreover $H_{t}$ satisfies the following dynamic programming 
property: 
\begin{equation}
\label{eq:2.19}
 H_{t}(Z)=H_{t}(H_{t+\tau}(Z)), 
 \quad t=0,\dots,T-\tau, \;\; \tau=1,\dots,T. 
\end{equation}
\end{thm}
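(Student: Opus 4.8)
The plan is to derive all three assertions from the explicit formula for $U_t$ in Theorem~\ref{thm:2.23} together with the duality of Theorem~\ref{thm:2.22}, treating the backward iteration, the dynamic programming property, and the entropic dual representation in turn. I would begin with \eqref{eq:2.17}. The first step is to record the cash-invariance $V_t(K-Z)=K+V_t(-Z)$ for $K\in L^\infty_t$, proved by backward induction down the defining recursion of $V$: since $K$ is $\mathcal{F}_r$-measurable for every $r\ge t$, it factors out of each conditional exponential $E[e^{-\beta_r V_r(\cdot)}|\mathcal{F}_{r-1}]$. By Theorem~\ref{thm:2.23} and $U_t(0)=0$, the indifference constraint $U_t(K-Z)\ge U_t(0)$ becomes $\tfrac1{\beta_t}(1-e^{-\beta_t(K+V_t(-Z))})\ge0$, which (as $\beta_t>0$) is equivalent to $K\ge -V_t(-Z)$; hence $H_t(Z)=-V_t(-Z)$. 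Substituting this identity into the backward recursion for $V_t(-Z)$ gives $H_t(Z)=\tfrac1{\beta_{t+1}}\log E[e^{\beta_{t+1}H_{t+1}(Z)}|\mathcal{F}_t]$ together with $H_T(Z)=-V_T(-Z)=Z$, which is \eqref{eq:2.17}.

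For the dynamic programming identity \eqref{eq:2.19} I would argue by downward induction on $s$, using \eqref{eq:2.17}. The base observation is that every $Y\in L^\infty_{t+\tau}$ is a fixed point of the recursion above level $t+\tau$, so $H_s(Y)=Y$ for $s\ge t+\tau$; in particular $H_{t+\tau}(H_{t+\tau}(Z))=H_{t+\tau}(Z)$. Putting $Y:=H_{t+\tau}(Z)$ and assuming $H_{s+1}(Y)=H_{s+1}(Z)$, one application of \eqref{eq:2.17} propagates the equality to level $s$; iterating down to $s=t$ yields $H_t(H_{t+\tau}(Z))=H_t(Z)$.

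The substantial part is the dual representation \eqref{eq:2.18}. I would denote its right-hand side by $\Phi_t(Z)$ and show that $\Phi_t$ satisfies the same recursion \eqref{eq:2.17}, so that the uniqueness of the backward iteration forces $\Phi_t=H_t$. At $s=T$ the only admissible martingale is $M_T\equiv 1$, giving $\Phi_T(Z)=Z$. For the inductive step I would factor an arbitrary $(M_s)\in\mathcal{M}_t^0$ as $M_s=\xi N_s$ with $\xi:=M_{t+1}$ (so $E[\xi|\mathcal{F}_t]=1$, $\xi>0$) and $(N_s)\in\mathcal{M}_{t+1}^0$; expanding $M_s\log M_s=\xi N_s\log\xi+\xi N_s\log N_s$ and using the martingale property of $N$ together with $\sum_{s=t+1}^T 1/\alpha_s=1/\beta_{t+1}$ from \eqref{eq:2.14}, the objective separates as $E[\xi\,G(N)\,|\,\mathcal{F}_t]-\tfrac1{\beta_{t+1}}E[\xi\log\xi\,|\,\mathcal{F}_t]$, where $G(N)=E[N_TZ|\mathcal{F}_{t+1}]-\sum_{s=t+1}^T\tfrac1{\alpha_s}E[N_s\log N_s|\mathcal{F}_{t+1}]$ is precisely the functional defining $\Phi_{t+1}(Z)$. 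Optimizing first over $N$ collapses the inner supremum to $E[\xi\,\Phi_{t+1}(Z)\,|\,\mathcal{F}_t]$, and the outer supremum over $\xi$ is resolved by the conditional Gibbs variational principle $\tfrac1{\beta}\log E[e^{\beta W}|\mathcal{F}_t]=\esssup_{\xi}\{E[\xi W|\mathcal{F}_t]-\tfrac1{\beta}E[\xi\log\xi|\mathcal{F}_t]\}$ with $W=\Phi_{t+1}(Z)$ and $\beta=\beta_{t+1}$, delivering exactly \eqref{eq:2.17} for $\Phi_t$.

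The main obstacle is the interchange of the essential supremum over $N$ with the conditional expectation $E[\xi\,\cdot\,|\,\mathcal{F}_t]$: the inequality ``$\le$'' is immediate from $\xi>0$ and $G(N)\le\Phi_{t+1}(Z)$, but the reverse requires that the supremum defining $\Phi_{t+1}(Z)$ be attained (or approximable along an upward-directed family) so that a single $N$ realizes $\Phi_{t+1}(Z)$ simultaneously for every $\xi$. I would secure this from the existence and explicit form of the optimal martingale provided by Theorems~\ref{thm:2.19}, \ref{thm:2.22} and~\ref{thm:2.23}. A secondary technical point is the integrability needed to differentiate under the expectation when verifying the Gibbs principle and to apply it to $W=\Phi_{t+1}(Z)$; this is controlled here because $Z\in L^\infty_T$ and each $H_s(Z)$ is bounded.
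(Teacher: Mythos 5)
Your proposal is correct, and while it reaches the same pivotal identity $H_t(Z)=-V_t(-Z)$, it gets there and to the dual formula by a genuinely different route. For \eqref{eq:2.17} you work directly from the explicit formula $U_t(Z)=\beta_t^{-1}(1-e^{-\beta_t V_t(Z)})$ of Theorem \ref{thm:2.23} plus the cash-invariance $V_t(K-Z)=K+V_t(-Z)$ (an easy backward induction); the paper instead reaches $H_t=-\Phi_t(1;-Z)=-V_t(-Z)$ by splitting the dual problem over positive martingales into an inner infimum over $\mathcal{M}_t^0$ and an outer infimum over the scaling $K\in L^\infty_{t,+}$, computing $E[u_s^*(KM_s)|\mathcal{F}_t]$ explicitly and optimizing over $K$. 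Your path to \eqref{eq:2.17} is shorter and avoids the duality altogether; the paper's detour is what makes \eqref{eq:2.18} fall out for free, since $-\Phi_t(1;-Z)$ \emph{is} the entropic dual expression once one uses $E[M_s|\mathcal{F}_t]=1$ to reduce $E[u_s^*(M_s)|\mathcal{F}_t]$ to $\alpha_s^{-1}E[M_s\log M_s|\mathcal{F}_t]$. Your derivation of \eqref{eq:2.18} — showing the dual functional satisfies the same backward recursion via the factorization $M_s=\xi N_s$, the identity $\sum_{s\ge t+1}\alpha_s^{-1}=\beta_{t+1}^{-1}$, and the conditional Gibbs variational principle — is a self-contained time-consistency argument not in the paper, and it correctly isolates the one technical point (interchanging $\esssup_N$ with $E[\xi\,\cdot\,|\mathcal{F}_t]$), which is indeed settled either by attainment from Theorems \ref{thm:2.19}--\ref{thm:2.23} or by the same pasting/upward-directedness device the paper uses in proving Theorem \ref{thm:2.19}; conversely, the paper's route quietly relies on the decomposition of an arbitrary positive martingale as $K M_s$ with $(M_s)\in\mathcal{M}_t^0$, which your argument does not need. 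The proof of \eqref{eq:2.19} via $H_s(Y)=Y$ for $\mathcal{F}_{t+\tau}$-measurable $Y$ and the recursion is the same as the paper's.
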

\begin{proof}
By Theorems \ref{thm:2.22} and \ref{thm:2.23} we have 
\begin{equation*}
  U_{t}(Z)=\essinf_{K\in L^{\infty}_{t,+}} \Phi_t(K;Z) 
\end{equation*}
with 
\begin{equation*}
  \Phi_t(K;Z)=\essinf_{(M_s)\in\mathcal{M}_t^0} 
    \left\{\sum_{s=t}E[u_s^{*}(KM_s)|\mathcal{F}_t] 
	+ E[M_TKZ|\mathcal{F}_t]\right\}. 
\end{equation*}
Here we have denoted by $L^{\infty}_{t,+}$ the set of 
all $Y\in L^{\infty}_t$ with $Y\ge 0$. 

On the other hand, we can write 
\begin{equation*}
  E[u_s^{*}(KM_s)|\mathcal{F}_t] 
   = \frac{1}{\alpha_s}(1-K) + \frac{1}{\alpha_s}K\log K 
   + \frac{K}{\alpha_s} E[M_s\log M_s |\mathcal{F}_t]. 
\end{equation*}
Thus we get
\begin{equation*}
  \Phi_t(K;Z)=\sum_{s=t}^T u_s^{*}(K) + \Psi_t(1;Z). 
\end{equation*}
However, since $(u_s^{*})^{\prime}(K)=(1/\alpha_s)\log K$, 
the essential infimum of $\Phi_t(K;Z)$ is attained by 
$K=e^{-\beta_t\Phi_t(1;Z)}$, whence  
\begin{equation*}
  U_{t}(Z)=\frac{1}{\beta_t}\left(1-e^{-\beta_t\Phi_t(1;Z)}\right). 
\end{equation*}
Thus, $\Phi_t(1;Z)=V_t(Z)$ and for $K\in L^{\infty}_t$, 
$0\le U_{t}(K-Z)$ 
if and only if $0\le \Phi_t(1;K-Z)$. 
Since $\Phi_t(1;K-Z)=K+\Phi_t(1;-Z)$, 
we deduce that  $H_{t}=-\Phi_t(1;-Z)=-V_t(-Z)$. 
Hence (\ref{eq:2.17}) and (\ref{eq:2.18}) hold. 
Since $E[M_s\log M_s|\mathcal{F}_t]\ge 0$, 
\begin{equation*}
 H_{t}(0)=\esssup_{(M_s)\in\mathcal{M}_t^0} 
   \left\{-\sum_{s=t}^T \frac{1}{\alpha_s} 
   E[M_s\log M_s|\mathcal{F}_t]\right\} = 0. 
\end{equation*}
From this and the translation invariance, we deduce that 
\begin{equation}
\label{eq:2.20}
 H_{t}(K)=K, \quad K\in L^{\infty}_t. 
\end{equation}
Using this and induction, 
we finally obtain the dynamic programming property (\ref{eq:2.19}). 
The iteration formulas (\ref{eq:2.17}) and (\ref{eq:2.20}) imply that 
\begin{equation*}
 H_{t}(H_{t+1}(Z))=\frac{1}{\beta_{t+1}}
  \log E[e^{\beta_{t+1}H_{t+1}(H_{t+1}(Z))}
   |\mathcal{F}_t] = H_{t}(Z). 
\end{equation*}
Hence we have (\ref{eq:2.19}) for $\tau=1$. Suppose that 
(\ref{eq:2.19}) holds for $\tau\in\{1,\dots,T-1\}$. Then, 
\begin{align*}
 H_{t}(H_{t+\tau+1}(Z))
 &=\frac{1}{\beta_{t+1}}\log E[\exp(\beta_{t+1}H_{t+1}
    (H_{t+\tau+1}(Z)))|\mathcal{F}_t] \\
 &= \frac{1}{\beta_{t+1}}\log E[\exp(\beta_{t+1}H_{t+1}
    (Z))|\mathcal{F}_t] \\ 
 &=H_{t}(Z), \qquad t=0,\dots,T-\tau-1. 
\end{align*}
Thus (\ref{eq:2.19}) follows. 
\end{proof}

\begin{rem}
We have the following Pareto optimal allocation of $H_{t}(Z)-Z$: 
\begin{equation*}
 H_{t}(Z)-Z=\sum_{s=t}^T \widehat{X}_s. 
\end{equation*}
Since $V_r(H_{t}(Z)-Z)=-H_{r}(-H_{t}(Z)-Z)
= H_{t}(Z)-H_{r}(Z)$ for $r\ge t$, 
the allocation $\widehat{X}_s$ is given by 
\begin{equation*}
 \widehat{X}_t=0, \quad \widehat{X}_s=-\frac{1}{\alpha_s}
  \sum_{r=t+1}^s \beta_r(H_{r}(Z)-H_{r-1}(Z)), 
  \;\; s=t+1,\dots,T. 
\end{equation*}
\end{rem}

\begin{rem}
Recall that a dynamic convex risk measure $\rho_t$ is called 
{\em time-consistent} if 
\begin{equation*}
 \rho_t(Z)=\rho_t(-\rho_{t+\tau}(Z)), 
 \quad t=0,\dots,T-\tau, \;\; \tau=1,\dots,T, 
 \quad Z\in L^{\infty}_T 
\end{equation*}
(see, e.g., \cite{FP}). 
From Remark \ref{rem:2.14} and Theorem \ref{thm:2.24}, 
the sequence of mappings 
\begin{equation*}
 \rho_t(Z):=H_{t}(-Z), \quad Z\in L^{\infty}_T, 
\end{equation*}
becomes a dynamic convex risk measure on $L^{\infty}_T$ with 
time-consistency.  
\end{rem}

\begin{rem}
\label{rem:2.27}
Suppose that $\mathcal{F}_0$ is $P$-trivial and 
that $\mathcal{F}_1=\mathcal{F}_T$. Then, 
by (\ref{eq:2.17}) we have $H_{t}(Z)=Z$ $(t\ge 1)$, 
whence 
\begin{equation*}
 H_{0}(Z)=\frac{1}{\beta_1}\log E[e^{\beta_1Z}], 
\end{equation*}
which is the classical exponential premium principle. 
\end{rem}

\begin{rem}
Our dynamic diversification approach gives the 
recursive formula (\ref{eq:2.17}) slightly different 
from that for the entropic risk measures in \cite{MZ2} and \cite{FP}. 
Indeed, the dynamic convolution produces the 
modified risk aversion parameter $(\beta_t)$ that is usually time-dependent. 
In particular, if all $\alpha_s$'s are identical to some $\alpha>0$ 
then $\beta_t=\alpha/(T-t+1)$. 
\end{rem}


\section{Large diversification effect}\label{sec:3}

In this section, we shall study the asymptotics of 
$H_{n,0}$ as the number of divisions of a risk goes to infinity.  
We assume that $\mathcal{F}_0$ consists of all null sets from 
$\mathcal{F}_T$ and their compliments. 
Hence all $\mathcal{F}_0$-measurable random variables are constants a.s. 

\subsection{Diversification over a large number of products}
\label{sec:3.2.1}

Consider the functions $u_{i,s}:\mathbb{R}\to\mathbb{R}$, 
$i=1,2,\dots$, $s=0,\dots,T$, such that each $u_{i,s}$ is strictly increasing and 
strictly concave and of class $C^2$, with 
\begin{equation}
\label{eq:3.0.7}
 u_{i,s}(0)=0, \;\; u_{i,s}^{\prime}(0)=1, \;\; 
 u_{i,s}^{\prime}(\infty)=0, \;\; u_{i,s}^{\prime}(-\infty)=\infty. 
\end{equation}
Recall from Section \ref{sec:2.1} that the utility 
$U_{0}(Z)=U_{n,0}(Z)$ of $Z\in L^{\infty}_T$ 
is given by 
\begin{equation*}
 U_{n,0}(Z)=\sup_{(X_{i,s})\in\mathcal{A}_{n,0}(Z)}
  \sum_{(i,s)\in\T_{n,0}}E[u_{i,s}(X_{i,s})],  
\end{equation*}
and that the indifference premium 
$H_{0}(Z)=H_{n,0}(Z)$ satisfies the risk loading property 
\begin{equation*}
 H_{n,0}(Z)\ge E(Z). 
\end{equation*}

We have the following convergence result:
\begin{thm}
\label{thm:3.3}
Suppose that for each $i=1,2,\dots,$ and $s=0,\dots,T$ the function 
$u^{\prime\prime}_{i,s}(x)$ is nondecreasing and satisfies 
\begin{equation}
\label{eq:3.1}
 \sum_{i=1}^{\infty}\int_0^{\delta}\frac{\delta-\lambda}
  {u_{i,s}^{\prime\prime}(I_{i,s}(1+\lambda))}d\lambda=-\infty, 
 \quad\forall\delta >0, \;\; s=0,\dots,T, 
\end{equation}
where $I_{i,s}=(u_{i,s}^{\prime})^{-1}$. Then, 
\begin{equation}
\label{eq:3.2}
 \lim_{n\to\infty}H_{n,0}(Z)=E(Z). 
\end{equation}
\end{thm}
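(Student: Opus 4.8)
We want to show $\lim_{n\to\infty} H_{n,0}(Z) = E(Z)$. Since the risk loading property gives $H_{n,0}(Z) \ge E(Z)$ for every $n$, it suffices to establish the reverse inequality in the limit, namely $\limsup_{n\to\infty} H_{n,0}(Z) \le E(Z)$. Let me think about what controls $H_{n,0}$.

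By Theorem 2.24 (in the exponential case) the premium is an entropic-type quantity, but here we are in the general (non-exponential) setting, so I should use the duality structure instead. The indifference premium is defined via $U_{n,0}$, and $U_{n,0}(Z) \le E(Z)$ always. The key is to understand how close $U_{n,0}(K-Z)$ is to $K - E(Z)$ when $n$ is large: if I can show that for any $\epsilon > 0$ and $n$ large, $U_{n,0}(E(Z) + \epsilon - Z) \ge 0 = U_{n,0}(0)$, then by the definition of $H_{n,0}$ as an essential infimum I get $H_{n,0}(Z) \le E(Z) + \epsilon$, which finishes the argument. So the whole problem reduces to a lower bound on the sup-convolution utility $U_{n,0}$.

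**The main estimate.**

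By Proposition 2.17, $U_{n,0}(W) = \sup_{(X_s)} \sum_{s} E[u_s^{(n)}(X_s)]$ where $u_s^{(n)}$ is the sup-convolution of $u_{1,s},\dots,u_{n,s}$. The crucial fact is that as $n\to\infty$, the aggregated utility $u_s^{(n)}$ flattens out toward the identity near the relevant scale: intuitively, spreading a fixed risk over many agents makes the collective certainty-equivalent loss vanish. Concretely, I expect to quantify this through the second-derivative/curvature data. Writing $W = E(Z)+\epsilon - Z$, I would take the allocation induced by the dual martingale (or simply evaluate $U_{n,0}$ along a good candidate martingale $(M_s) \in \mathcal{M}_0$) and estimate the gap $E(W) - U_{n,0}(W)$ from above. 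The hypothesis that each $u_{i,s}''$ is nondecreasing, together with the divergence condition \eqref{eq:3.1}, is precisely what is needed to show this curvature gap tends to zero: the integrals $\int_0^\delta (\delta-\lambda)/u_{i,s}''(I_{i,s}(1+\lambda))\,d\lambda$ measure the deficit of $u_{i,s}$ from linearity near the optimum $M_s \approx 1$, and their divergence forces the aggregate deficit per unit risk to $0$.

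**Carrying it out.**

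First I would fix $s$ and analyze the one-period problem: for a bounded random variable $W$ with the dual optimizer $M$ close to $1$ (which holds because $u_s^{(n)\prime}(0)=1$ and the optimal $M_s$ clusters near $1$ as curvature vanishes), expand $u_s^{(n)*}(M_s)$ and bound $E[u_s^{(n)*}(M_s)] + E[M_s W]$ from above by $E[W] + (\text{curvature remainder})$. The remainder is controlled, via the monotonicity of $u_{i,s}''$ and a change of variables $y = I_{i,s}(1+\lambda)$, by the left-hand side of \eqref{eq:3.1}; the divergence assumption then drives it to $0$ uniformly on the bounded range of $W$. Summing over the finitely many $s = 0,\dots,T$ and using Theorem 2.22 to identify $U_{n,0}(W)$ with the dual value gives $U_{n,0}(W) \ge E(W) - o(1)$. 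Taking $W = E(Z)+\epsilon - Z$, for $n$ large enough $U_{n,0}(W) \ge \epsilon - o(1) \ge 0$, which yields $H_{n,0}(Z) \le E(Z)+\epsilon$ and, with $\epsilon \downarrow 0$, the claim.

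The hard part will be the curvature estimate itself: making rigorous the claim that the dual optimizer concentrates near $M_s \equiv 1$ as $n\to\infty$, and then converting the convex-dual remainder term into exactly the integral appearing in \eqref{eq:3.1} through the correct change of variables. The monotonicity of $u_{i,s}''$ is what makes the Taylor-type remainder bound clean and uniform, so I expect that hypothesis to be used essentially here rather than the boundedness of $Z$ playing more than a supporting role.
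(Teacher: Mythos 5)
Your overall architecture is right: since $H_{n,0}(Z)\ge E(Z)$ always, it suffices to show $U_{n,0}(E(Z)+\epsilon-Z)\ge 0$ for large $n$, i.e.\ that $U_{n,0}(W)\to E(W)$, and you have correctly located where hypothesis \eqref{eq:3.1} must enter (it controls the conjugate functions $v_i(y)=\sup_x(xy+u_{i,s}(-x))$ through exactly the integral $v_i(y)=-\int_0^{y-1}\frac{y-1-\lambda}{u_{i,s}''(I_{i,s}(1+\lambda))}\,d\lambda$ for $y>1$). But the route you propose for the main estimate has a genuine gap that you yourself flag as ``the hard part'': you want to pass through the dual problem, expand $u_s^{(n)*}(M_s)$ around $M_s=1$, and argue that the dual optimizer concentrates near $1$. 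None of that is established, and it is not cheap: to use the dual value as a \emph{lower} bound for $U_{n,0}(W)$ you need strong duality (Theorem \ref{thm:2.22}), which requires existence of a dual minimizer with $I_s(M_s)\in L^\infty_s$, and the claimed concentration of $M_s$ at $1$ is precisely the content that needs proof. As written, the argument is circular at its core step.

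The paper avoids all of this by staying on the primal side. Fix \emph{any} allocation $(Y_s)\in\mathcal{A}_{1,0}(W)$; then $U_{n,0}(W)\ge\sum_s E[u_s^{(n)}(Y_s)]$ by Proposition \ref{prop:2.17}, so it suffices to prove the purely deterministic Lemma \ref{lem:3.5}: $u_s^{(n)}(x)\uparrow x$ pointwise. That lemma is proved by the conjugate computation you gesture at --- condition \eqref{eq:3.1} forces $v^{(n)}(y)=\sum_{i=1}^n v_i(y)\to+\infty$ for every $y\neq 1$ (with a separate, easier bound for $0<y<1$ using monotonicity of $u_{i,s}''$), so the limiting conjugate is $+\infty$ off $y=1$ and $0$ at $y=1$, whose biconjugate is the identity. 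Monotone convergence then gives $E[u_s^{(n)}(Y_s)]\to E[Y_s]$, hence $U_{n,0}(W)\to E(W)$, with no information about the dual optimizer required. I would redirect your effort from the dual-concentration claim to proving the pointwise statement $u_s^{(n)}(x)\to x$ directly via Fenchel conjugates; once you have that, your reduction from $H_{n,0}$ to $U_{n,0}$ (or the paper's variant, which uses that $H_{n,0}(Z)$ is nonincreasing in $n$ and that $U_{n,0}(H_{n,0}(Z)-Z)=0$ by continuity) closes the proof.
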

\begin{rem}
 Since $u_{i,s}^{\prime\prime}$ is nondecreasing, we have 
\begin{equation}
\label{eq:3.3}
 \frac{\delta^2}{2u_{i,s}^{\prime\prime}(0)}
  \le\int_0^{\delta}\frac{\delta-\lambda}
   {u_{i,s}^{\prime\prime}(I_{i,s}(1+\lambda))}d\lambda. 
\end{equation}
Thus, the condition (\ref{eq:3.1}) is stronger than 
\begin{equation*}
 \sum_{i=1}^{\infty}\frac{1}{u_{i,s}^{\prime\prime}(0)}=-\infty, 
 \quad s=0,\dots,T. 
\end{equation*}
For example, the family of exponential utility functions
\begin{equation*}
 u_{i,s}(x)=\frac{1}{\alpha_{i,s}}(1-e^{-\alpha_{i,s} x}), 
  \quad \alpha_{i,s}>0 
\end{equation*}
satisfies (\ref{eq:3.1}) when 
$\sum_{i=1}^{\infty}(1/\alpha_{i,s})=+\infty$, $s=0,\dots,T$. 
\end{rem}

\begin{proof}[The proof of Theorem \ref{thm:3.3}]
By Proposition \ref{prop:2.17}, 
for any $(Y_s)\in\mathcal{A}_{n,0}(Z)$, 
\begin{equation}
\label{eq:3.5}
 U_{n,0}(Z)\ge \sum_{s=0}^T E[u^{(n)}_s(Y_s)]. 
\end{equation}
By Lemma \ref{lem:3.5} below and the monotone convergence theorem, 
\begin{equation*}
 \lim_{n\to\infty}E[u^{(n)}_s(Y_s)]=E[Y_s], \quad s=0,\dots,T. 
\end{equation*}
This and (\ref{eq:3.5}) give 
\begin{equation*}
 \lim_{n\to\infty}U_{n,0}(Z)=E(Z). 
\end{equation*}

We notice that the function $x\mapsto U_{n,0}(x-Z)$ is 
increasing and concave, whence continuous, on $\mathbb{R}$. Thus we have 
\begin{equation*}
 U_{n,0}(H_{n,0}(Z)-Z)=0, \quad n=1,2,\dots.
\end{equation*}
On the other hand, from Proposition \ref{prop:2.7} 
$H_{n,0}(X)\ge E(Z)$. 
Also, since $U_{n,0}(Z)$ is nondecreasing in $n$, 
\begin{equation*}
 U_{n+1,0}(H_{n,0}(Z)-Z)
 \ge U_{n,0}(H_{n,0}(Z)-Z)= 0. 
\end{equation*}
From this and the definition of $H_{n,0}$, $H_{n+1,0}(Z)\le H_{n,0}(Z)$. 
Hence putting 
$H_{\infty,0}(Z):=\lim_{n\to\infty}H^{(n)}_0(Z)$, 
we have 
\begin{equation*}
 E[H_{\infty,0}(Z)-Z]=\lim_{n\to\infty}
  U_{n,0}(H_{\infty,0}(Z)-Z)\le 
  \lim_{n\to\infty}U_{n,0}(H_{n,0}(Z)-Z)=0. 
\end{equation*}
Thus $H_{\infty,0}(Z)=E(Z)$. 
\end{proof}

In the proof of the above theorem, we have used the following lemma: 
\begin{lem}
\label{lem:3.5}
Under the assumption of Theorem \ref{thm:3.3}, we have for each $s=0,\dots,T$, 
\begin{equation*}
 \lim_{n\to\infty}u_s^{(n)}(x)=x, \quad\forall
  x\in\mathbb{R}. 
\end{equation*} 
\end{lem}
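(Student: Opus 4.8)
The plan is to sandwich $u_s^{(n)}(x)$ between $x$ and a lower bound built from the explicit optimal allocation of Proposition \ref{prop:2.16}. First I would note that strict concavity together with $u_{i,s}(0)=0$ and $u_{i,s}'(0)=1$ forces $u_{i,s}(t)\le t$ for all $t$, so that $u_s^{(n)}(x)=\sup\{\sum_{i=1}^n u_{i,s}(x_i):\sum x_i=x\}\le x$. Hence the whole content of the lemma is the reverse inequality $\liminf_n u_s^{(n)}(x)\ge x$.

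For the lower bound I would use the maximizer of Proposition \ref{prop:2.16}: setting $y_n:=(\sum_{j=1}^n I_{j,s})^{-1}(x)$ for the common marginal‑utility level, the optimizer is $x_i=I_{i,s}(y_n)$, so that $\sum_{i=1}^n I_{i,s}(y_n)=x$ and $u_s^{(n)}(x)=\sum_{i=1}^n u_{i,s}(I_{i,s}(y_n))$. Writing $u_{i,s}^*(y)=\sup_x\{u_{i,s}(x)-xy\}$, so that $u_{i,s}^*(y)=u_{i,s}(I_{i,s}(y))-yI_{i,s}(y)$, $(u_{i,s}^*)'=-I_{i,s}$ and $u_{i,s}^*\ge 0$ with unique minimum $u_{i,s}^*(1)=0$ (since $I_{i,s}(1)=0$), a one‑line rearrangement gives the identity $x-u_s^{(n)}(x)=(1-y_n)x-\sum_{i=1}^n u_{i,s}^*(y_n)\le |x|\,|y_n-1|$. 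Thus the lemma reduces to proving $y_n\to 1$.

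To get $y_n\to 1$ I would use that $y\mapsto\sum_{i=1}^n I_{i,s}(y)$ is continuous and strictly decreasing with value $0$ at $y=1$, so that $y_n\in(1-\delta,1+\delta)$ as soon as $\sum_{i=1}^n I_{i,s}(1-\delta)>x>\sum_{i=1}^n I_{i,s}(1+\delta)$; it therefore suffices to prove, for every $\delta\in(0,1)$, the two divergences $\sum_{i=1}^\infty I_{i,s}(1-\delta)=+\infty$ and $\sum_{i=1}^\infty(-I_{i,s}(1+\delta))=+\infty$. The upper side comes straight from the hypothesis: Taylor's formula with integral remainder, using $(u_{i,s}^*)''(y)=-1/u_{i,s}''(I_{i,s}(y))$, yields $u_{i,s}^*(1+\delta)=-\int_0^\delta \frac{\delta-\lambda}{u_{i,s}''(I_{i,s}(1+\lambda))}\,d\lambda$, so that $(\ref{eq:3.1})$ is precisely the statement $\sum_i u_{i,s}^*(1+\delta)=+\infty$; since $-I_{i,s}$ is increasing, $-I_{i,s}(1+\delta)\ge \delta^{-1}u_{i,s}^*(1+\delta)$, and the divergence follows.

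The remaining and main obstacle is the lower side $\sum_i I_{i,s}(1-\delta)=+\infty$, since $(\ref{eq:3.1})$ is phrased only at arguments $1+\lambda>1$ and information must be transferred across $y=1$; this is exactly where the assumption that $u_{i,s}''$ is nondecreasing is used. Here I would write $I_{i,s}(1-\delta)=\int_{1-\delta}^1 |u_{i,s}''(I_{i,s}(y))|^{-1}\,dy$ and observe that $I_{i,s}(y)\ge 0$ on $[1-\delta,1]$, whence monotonicity of $u_{i,s}''$ gives $|u_{i,s}''(I_{i,s}(y))|\le|u_{i,s}''(0)|$ and so $I_{i,s}(1-\delta)\ge \delta/|u_{i,s}''(0)|$. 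The divergence then follows because, as recorded in the Remark through $(\ref{eq:3.3})$, hypothesis $(\ref{eq:3.1})$ is stronger than $\sum_i 1/|u_{i,s}''(0)|=+\infty$. With both divergences established, $y_n\to 1$, and the estimate $0\le x-u_s^{(n)}(x)\le |x|\,|y_n-1|\to 0$ closes the argument.
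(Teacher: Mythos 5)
Your argument is correct, and it reaches the conclusion by a route that differs from the paper's in its final step. The two key estimates are the same in both proofs: you convert hypothesis (\ref{eq:3.1}) into the divergence $\sum_i u_{i,s}^*(1+\delta)=+\infty$ via the Taylor/integral-remainder identity for the conjugate, and you cross over to the side $y<1$ exactly as the paper does, using the monotonicity of $u_{i,s}''$ together with (\ref{eq:3.3}) to get $\sum_i 1/|u_{i,s}''(0)|=+\infty$. Where you diverge is in how these divergences yield $u_s^{(n)}(x)\to x$. The paper passes to the limit function $u^{(\infty)}$, identifies its conjugate as $+\infty$ off $\{1\}$ and $0$ at $1$ (using that the conjugate of a sup-convolution is the sum of conjugates, \cite[Theorem 16.4]{Ro}), and then biconjugates via \cite[Theorem 12.2]{Ro} plus a closure/monotonicity argument. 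You instead stay at finite $n$, exploit the explicit optimizer of Proposition \ref{prop:2.16} to write $u_s^{(n)}(x)=y_nx+\sum_{i=1}^n u_{i,s}^*(y_n)$ with $y_n=(\sum_j I_{j,s})^{-1}(x)$, and show the common marginal-utility level satisfies $y_n\to 1$ by squeezing it between $1-\delta$ and $1+\delta$ once $\sum_i I_{i,s}(1-\delta)>x>\sum_i I_{i,s}(1+\delta)$; the sandwich $0\le x-u_s^{(n)}(x)\le |x|\,|y_n-1|$ then closes the argument. This buys you a self-contained proof with no appeal to biconjugation or closure operations, and it is quantitative (an explicit rate in terms of $|y_n-1|$), at the cost of relying on the explicit form of the maximizer; the paper's duality route is less explicit but would survive in settings where the optimizer is not available in closed form.
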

\begin{proof}
We fix $s=0,\dots,T$ and drop the subscript $s$ on all functions 
for brevity. 
 Since $u^{(n)}(x)$ is increasing in $n$ and 
$u^{(n)}(x)\le x$ for all $x$, there exists a limit 
$u^{(\infty)}(x)=\lim_{n\to\infty}u^{(n)}(x)$ and this function 
$u^{(\infty)}$ is proper and concaveD

Let $v_i(y)$ be the conjugate of $-u_i(-x)$, i.e., 
\begin{equation*}
 v_i(y):=(-u_i(-\cdot))^{*}(y)=\sup_{x\in\mathbb{R}}(xy+u_i(-x)), 
  \quad y\in\mathbb{R}. 
\end{equation*}
Also, let $v^{(n)}(y)$ be the conjugate of $-u^{(n)}(-x)$. 
Then for each $i$, an elementary analysis of $u_i$ shows that 
\begin{equation*}
v_i(y)=
\begin{cases}
 u_i(I_i(y))-yI_i(y) & \text{if}\; y>0, \\
 +\infty & \text{if}\; y\le 0, 
\end{cases} 
\end{equation*}
and that $v_i(1)=0$. Also, 
\begin{equation*}
 v_i^{\prime}(y)=-I_i(y), \quad 
 v_i^{\prime\prime}(y)=\frac{-1}{u_i^{\prime\prime}(I_i(y))}. 
\end{equation*}
Using integration by parts, we get for each $y>0$, 
\begin{equation*}
 v_i(y)=v_i(1)+v_i^{\prime}(1)(y-1)
  +\int_1^y v_i^{\prime\prime}(t)(y-t)dt
  = \int_1^y v_i^{\prime\prime}(t)(y-t)dt.  
\end{equation*}
Hence, if $y>1$, then 
\begin{equation*}
 v_i(y)=\int_0^{y-1}v_i^{\prime\prime}(1+s)(y-1-s)ds
  =-\int_0^{y-1}\frac{y-1-s}{u_i^{\prime\prime}(I_i(1+s))}ds. 
\end{equation*}
Thus (\ref{eq:3.1}) implies 
\begin{equation}
\label{eq:3.4}
 \sum_{i=1}^n v_i(y)\to +\infty\quad (n\to\infty). 
\end{equation}
If $0<y<1$, then $v_i^{\prime\prime}$ is decreasing, so that we have
\begin{equation*}
 v_i(y)\ge v_i^{\prime\prime}(1)\int_y^1(t-y)dt
 = -\frac{(1-y)^2}{2}\cdot\frac{1}{u_i^{\prime\prime}(0)}. 
\end{equation*}
Thus, from (\ref{eq:3.3}), (\ref{eq:3.4}) again holds. 

Put $v^{(\infty)}(y):=(-u^{(\infty)}(-\cdot))^{*}(y)$. Then 
$v^{(\infty)}(y)\ge 0$ and $u^{(\infty)}(-x)\le -x$. 
Hence $v^{(\infty)}(1)=0$. 
Since $u^{(n)}(x)$ is increasing in $n$, it follows from 
\cite[Theorem 16.4]{Ro} that for $y\neq 1$, 
\begin{align*}
 v^{(\infty)}(y)&=\sup_{x\in\mathbb{R}}(yx+u^{(\infty)}(-x))
 \ge \sup_{x\in\mathbb{R}}(yx+u^{(n)}(-x)) \\
 &= v^{(n)}(y)= \sum_{i=1}^n v_i(y)\to +\infty, \quad n\to\infty. 
\end{align*}
Thus $v^{(\infty)}(y)=+\infty$ for $y\neq 1$. 
From this and \cite[Theorem 12.2]{Ro}, 
\begin{equation*}
 \liminf_{z\to x}(-u^{(\infty)}(-z))=(v^{(\infty)})^{*}(x)
 =\sup_{y\in\mathbb{R}}(yx-v^{(\infty)}(y))=x, \quad x\in\mathbb{R}. 
\end{equation*}
Thus, if we let $z\searrow x$, then from the monotonicity of 
$u^{(\infty)}$, we see that 
\begin{equation*}
 x= \liminf_{z\searrow x}(-u^{(\infty)}(-z))\ge -u^{(\infty)}(-x)\ge x,  
\end{equation*}
which implies $u^{(\infty)}(x)=x$, as desired. 
\end{proof}

In the theory of premium calculations, it is well-known that 
the exponential principle approximates the variance principle 
in the following way: 
\begin{equation*}
 \frac{1}{\alpha}\log E[e^{\alpha Z}]
 =E(Z)+\frac{\alpha}{2}\Var(Z)+O(\alpha^2), \qquad \alpha\searrow 0,  
\end{equation*}
where $O(\cdot)$ denotes Landau's symbol. 

In our dynamic setting, we have the following analogous result: 
\begin{thm}
\label{thm:3.6}
Let $H_{n,0}(Z)$ be 
the indifference premium of $Z\in L^{\infty}_T$ 
with the exponential utilities 
$(u_{i,s})$ defined by $(\ref{eq:2.13})$. 
Suppose that the sequence $(\alpha_{i,s})$ in $(\ref{eq:2.13})$ 
is bounded. Then we have
\begin{equation*}
 H_{n,0}(Z)=E(Z)+\frac{1}{2}\sum_{t=1}^T\beta^{(n)}_t 
  E[(\Delta Z_t)^2] + O(n^{-2}), \qquad n\to\infty, 
\end{equation*}
where $\beta_t^{(n)}=\beta_t$ is defined by $(\ref{eq:2.14})$. 
\end{thm}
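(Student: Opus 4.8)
The plan is to run the backward recursion (\ref{eq:2.17}) for $H_t=H_{n,t}$ together with a uniform second-order Taylor expansion of the conditional entropic functional, organised as a downward induction on $t$. Throughout I write $\beta_t=\beta_t^{(n)}$, let $Z_t:=E[Z|\mathcal{F}_t]$ be the closing martingale of $Z$, and set $\Delta Z_t:=Z_t-Z_{t-1}$. Two preliminary facts fix the scales involved. First, since $(\alpha_{i,s})$ is bounded, say $\alpha_{i,s}\le\bar\alpha$, the identities $1/\alpha_s=\sum_{i=1}^n(1/\alpha_{i,s})$ and (\ref{eq:2.14}) give $1/\beta_t\ge n/\bar\alpha$, hence $\beta_t\le\bar\alpha/n=O(n^{-1})$ uniformly in $t$; this is what turns powers of $\beta_t$ into powers of $n^{-1}$. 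Second, monotonicity and translation invariance (Proposition \ref{prop:2.7}) together with $H_t(K)=K$ for constants $K$ (see (\ref{eq:2.20})) yield the uniform bound $\|H_t(Z)\|_\infty\le\|Z\|_\infty$, independent of $n$ and $t$, which I need to make all Taylor remainders uniform.

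Next I would establish the expansion
\begin{equation*}
 \frac{1}{\gamma}\log E[e^{\gamma W}|\mathcal{F}_t]
  = E[W|\mathcal{F}_t] + \frac{\gamma}{2}\Var(W|\mathcal{F}_t) + O(\gamma^2),
\end{equation*}
valid as $\gamma\to 0$ uniformly over $W\in L^\infty_{t+1}$ with $\|W\|_\infty\le C$, where $\Var(W|\mathcal{F}_t)=E[W^2|\mathcal{F}_t]-(E[W|\mathcal{F}_t])^2$. This follows by expanding $E[e^{\gamma W}|\mathcal{F}_t]=1+\gamma E[W|\mathcal{F}_t]+\tfrac{\gamma^2}{2}E[W^2|\mathcal{F}_t]+O(\gamma^3)$, the remainder being bounded by $\tfrac{1}{6}|\gamma|^3C^3e^{|\gamma|C}$ via the integral form of Taylor's theorem, and then inserting this into $\log(1+x)=x-\tfrac{x^2}{2}+O(x^3)$.

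Then I would prove by downward induction on $t$ that
\begin{equation*}
 H_t(Z)=Z_t+\frac{1}{2}\sum_{r=t+1}^T\beta_r E[(\Delta Z_r)^2|\mathcal{F}_t]+O(n^{-2}),
\end{equation*}
with the remainder uniform in $L^\infty$. The base case $t=T$ is $H_T(Z)=Z=Z_T$. For the inductive step, put $W=H_{t+1}(Z)$, so that the hypothesis reads $W=Z_{t+1}+G$ with $G=O(n^{-1})$ in $L^\infty$ (each $\beta_r=O(n^{-1})$). Applying the expansion with $\gamma=\beta_{t+1}$, the first-order term is $E[W|\mathcal{F}_t]=Z_t+\tfrac{1}{2}\sum_{r=t+2}^T\beta_r E[(\Delta Z_r)^2|\mathcal{F}_t]+O(n^{-2})$, by the martingale identity $E[Z_{t+1}|\mathcal{F}_t]=Z_t$ and the tower property. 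For the second-order term, $\Var(W|\mathcal{F}_t)=\Var(Z_{t+1}|\mathcal{F}_t)+O(n^{-1})$ --- the cross and $G$-variance contributions are $O(n^{-1})$ by Cauchy--Schwarz since $G=O(n^{-1})$ and $Z_{t+1}$ is bounded --- and $\Var(Z_{t+1}|\mathcal{F}_t)=E[(\Delta Z_{t+1})^2|\mathcal{F}_t]$, so $\tfrac{\beta_{t+1}}{2}\Var(W|\mathcal{F}_t)=\tfrac{\beta_{t+1}}{2}E[(\Delta Z_{t+1})^2|\mathcal{F}_t]+O(n^{-2})$. Summing the two terms reproduces the expansion at level $t$. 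Taking $t=0$ and using that $\mathcal{F}_0$ is trivial --- so $Z_0=E(Z)$ and each $E[\,\cdot\,|\mathcal{F}_0]$ is an ordinary expectation --- gives the asserted formula.

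The main obstacle is the uniform bookkeeping of errors: I must ensure both that the Taylor remainder is genuinely $O(\gamma^2)$ uniformly over the relevant $W$ (which is exactly why the uniform bound $\|H_{t+1}(Z)\|_\infty\le\|Z\|_\infty$ is needed) and that the $O(n^{-2})$ error does not accumulate across the $T$ iterations. The latter holds because conditional expectation is an $L^\infty$-contraction, so an inherited $O(n^{-2})$ error remains $O(n^{-2})$; the variance correction is multiplied by $\beta_{t+1}=O(n^{-1})$; and there are only finitely many (namely $T$) steps.
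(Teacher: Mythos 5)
Your proposal is correct and follows essentially the same route as the paper: the backward recursion (\ref{eq:2.17}), the uniform bound $\|H_t(Z)\|_\infty\le\|Z\|_\infty$, a second-order Taylor expansion of the conditional entropic functional, and downward induction on $t$. The only (harmless) difference is that you bound the perturbation of $\Var(\,\cdot\,|\mathcal{F}_t)$ crudely by $O(n^{-1})$ via Cauchy--Schwarz, which suffices after multiplication by $\beta_{t+1}=O(n^{-1})$, whereas the paper exploits the exact orthogonality $E[\Delta Z_t\{E[(\Delta Z_s)^2|\mathcal{F}_t]-E[(\Delta Z_s)^2|\mathcal{F}_{t-1}]\}|\mathcal{F}_{t-1}]=0$ to kill the cross term outright.
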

\begin{proof}
First, using (\ref{eq:2.17}) repeatedly, we get 
\begin{equation*}
 \|H_{t}(Z)\|_{\infty}\le\|Z\|_{\infty}, \quad t=0,\dots,T-1, 
\end{equation*}
where $\|\cdot\|_{\infty}$ stands for the norm of the Banach space 
$L^{\infty}_T$. 
From this as well as $\beta_t=O(n^{-1})$ and Taylor's theorem for the function 
$x\mapsto \log E[e^{xZ}|\mathcal{F}_t]$, we find that
\begin{equation}
\label{eq:3.6}
 H_{t}(Z)=E(H_{t}(Z)|\mathcal{F}_{t-1})
  +\frac{\beta_t}{2}\Var(H_{t}(Z)|\mathcal{F}_{t-1}) 
  +R_{t}(n)n^{-2},  
\end{equation}
where $R_{t}(n)$, $n=1,2,\dots,$ are $\mathcal{F}_{t}$-measurable 
random variables satisfying $\sup_n\|R_{t}(n)\|_{\infty}$ is finite. 
In what follows, we also write $R_t(n)$ for random variables having the 
same properties, which may not be necessarily 
equal to each other. 
In particular, we find that 
\begin{equation*}
 H_{T-1}(Z)=E(Z|\mathcal{F}_{T-1}) + \frac{\beta_T}{2}
  E[(\Delta Z_T)^2|\mathcal{F}_{T-1}] + R_{T-1}(n)n^{-2}. 
\end{equation*}

Now suppose that for some $t=1,\dots,T-1$, 
\begin{equation}
\label{eq:3.7}
 H_{t}(Z)=E(Z|\mathcal{F}_{t}) + \sum_{s=t+1}^T 
  \frac{\beta_s}{2}E[(\Delta Z_s)^2|\mathcal{F}_{t}] + R_{t}(n)n^{-2}. 
\end{equation}
Then, 
\begin{equation}
\label{eq:3.8}
 E[H_{t}(Z)|\mathcal{F}_{t-1}]= E[Z|\mathcal{F}_{t-1}] 
 +\sum_{s=t+1}^T\frac{\beta_s}{2}E[(\Delta Z_s)^2|\mathcal{F}_{t-1}]
 +R_{t-1}(n)n^{-2}. 
\end{equation}
Also, using $\beta_s=O(n^{-1})$ and 
\begin{equation*}
 E\left[\Delta Z_t\left\{E[(\Delta Z_s)^2|\mathcal{F}_t]
     -E[(\Delta Z_s)^2|\mathcal{F}_{t-1}]\right\}
     \big|\mathcal{F}_{t-1}\right] = 0, 
\end{equation*}
we obtain 
\begin{align}
 &\Var(H_{t}(Z)|\mathcal{F}_{t-1}) 
   \label{eq:3.9} \\
 &=E\left[\left\{\Delta Z_t +\sum_{s=t+1}\frac{\beta_s}{2}\left(
    E[(\Delta Z_s)^2|\mathcal{F}_t]-E[(\Delta Z_s)^2|\mathcal{F}_{t-1}]
    \right)+R_t(n)n^{-2}\right\}^2\Bigg|\mathcal{F}_{t-1}\right] 
  \nonumber \\
 &= E[(\Delta Z_t)^2|\mathcal{F}_{t-1}] + R_{t-1}(n)n^{-2}. 
  \nonumber
\end{align}
Putting (\ref{eq:3.8}) and (\ref{eq:3.9}) into (\ref{eq:3.6}), 
we have (\ref{eq:3.7}) for $t-1$. 
Therefore by the mathematical induction, 
(\ref{eq:3.7}) holds for all $t=0,\dots,T-1$. 
In particular, the case $t=0$ gives the desired result. 
\end{proof}

\subsection{Diversification over a large number of time divisions}
\label{sec:3.2.2}

Here, we discuss the asymptotics of $H_{n,0}(Z)$ 
when the number of divisions of time increase to infinity. 
To this end, take a filtration $(\mathcal{F}_t)_{t\in [0,T]}$ 
with continuous parameter. 
We assume that the probability space $(\Omega,\mathcal{F},P)$ is 
complete and 
$\mathcal{F}_t=\cap_{s>t}\mathcal{F}_s$, $t\in [0,T]$, i.e., 
the filtered probability space 
$(\Omega,\mathcal{F},(\mathcal{F}_t)_{t\in [0,T]},P)$ satisfies 
the usual conditions. 

We consider the modified index set 
\begin{equation*}
\label{eq:3.0.5}
 \T^{(m)}_{n,k}:=\left\{\left(i,\frac{jT}{m}\right) : 
  i=1,\dots,n, \; j=k,\dots,m\right\}.  
\end{equation*}
Let $u_{i,s}:\mathbb{R}\to\mathbb{R}$, $i=1,\dots,n$, $s\in [0,T]$, 
be a strictly increasing, strictly 
concave function of class $C^1$, satisfying (\ref{eq:3.0.7}). 
As in Section \ref{sec:2.1}, we define the utility map 
$U_{n,0}^{(m)}(Z)=U_{n,0}(Z)$ of $Z\in L^{\infty}_T$ by 
\begin{equation*}
 U_{n,0}^{(m)}(Z):=\sup_{(X_{i,s})\in\mathcal{A}_{n,0}(Z)}
  \sum_{(i,s)\in\T^{(m)}_{n,0}}E[u_{i,s}(X_{i,s})],  
\end{equation*}
and consider the resulting indifference premium $H_{n,0}^{(m)}(Z)$. 

We have the following convergence result: 
\begin{thm}
Suppose that $u^{\prime\prime}_{i,s}(x)$ is nondecreasing and satisfies, 
for each $i=1,\dots,n$, 
\begin{equation*}
 \lim_{m\to\infty}\sum_{j=0}^{m}\int_0^{\delta}\frac{\delta-\lambda}
  {u_{i,jT/m}^{\prime\prime}(I_{i,jT/m}(1+\lambda))}d\lambda=-\infty, 
 \quad\forall\delta >0. 
\end{equation*}
Then, 
\begin{equation*}
 \lim_{m\to\infty} H_{n,0}^{(m)}(Z)=E(Z). 
\end{equation*}
\end{thm}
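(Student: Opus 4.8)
The plan is to follow the architecture of the proof of Theorem \ref{thm:3.3}, with the diversification over products replaced by diversification over the refining time grid, and to isolate the one genuinely new difficulty. First I would reduce over the $n$ products by Proposition \ref{prop:2.17}: writing $w_s:=u^{(n)}_s$ for the product sup-convolution \eqref{eq:2.9.5} at a grid time $s=jT/m$,
\begin{equation*}
 U^{(m)}_{n,0}(Z)=\esssup_{(X_s)\in\mathcal{A}_{1,0}(Z)}\sum_{j=0}^m E\big[w_{jT/m}(X_{jT/m})\big].
\end{equation*}
Since $u_{i,s}(x)\le x$ we have $w_s(x)\le x$, hence the upper bound $U^{(m)}_{n,0}(Z)\le E(Z)$ holds for every $m$. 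Thus the theorem reduces to the lower bound $\liminf_{m\to\infty}U^{(m)}_{n,0}(Z)\ge E(Z)$, after which the passage to $H^{(m)}_{n,0}$ is routine.

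One is tempted to copy the lower bound of Theorem \ref{thm:3.3} via the time analogue of Lemma \ref{lem:3.5}. The hypothesis is exactly the statement, written for a single product $i$, that the \emph{time} sup-convolution of $(u_{i,jT/m})_{j=0}^m$ converges to the identity: in the conjugate variables $v_{i,s}:=(-u_{i,s}(-\cdot))^{*}$ of Lemma \ref{lem:3.5} the assumption reads $\sum_{j=0}^m v_{i,jT/m}(y)\to+\infty$ for every $y\neq1$, and since $w_s\ge u_{1,s}$ one has $(-w_s(-\cdot))^{*}\ge v_{1,s}$, so the same divergence holds for $w$. The verbatim conjugate argument of Lemma \ref{lem:3.5} then gives $\widetilde w^{(m)}(x)\to x$ for the time sup-convolution $\widetilde w^{(m)}$. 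The decisive difference from Theorem \ref{thm:3.3}, and the main obstacle, is that the product sup-convolution is \emph{free} with respect to adaptedness---the $n$ summands at a fixed time are all $\mathcal{F}_s$-measurable---whereas the pointwise optimal time split realizing $\widetilde w^{(m)}(Z)$ is a function of $Z$, hence not adapted. Consequently $\widetilde w^{(m)}\to\mathrm{id}$ only bounds the \emph{unconstrained} convolution and merely reproduces the upper bound; it does not deliver the lower bound, which must genuinely use the filtration.

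To obtain the lower bound I would pass to the dual problem, where adaptedness is carried harmlessly by the martingale structure. By Theorem \ref{thm:2.22},
\begin{equation*}
 U^{(m)}_{n,0}(Z)=\essinf_{(M_s)}\Big\{\sum_{j=0}^m E\big[w^{*}_{jT/m}(M_{jT/m})\big]+E[M_TZ]\Big\},
\end{equation*}
the infimum over positive martingales $(M_s)$; the choice $M\equiv1$ recovers $U^{(m)}_{n,0}(Z)\le E(Z)$, since $w^{*}_s\ge0$ and $w^{*}_s(1)=0$. For the matching lower bound one must show that \emph{every} such $M$ gives a value $\ge E(Z)-\varepsilon_m$ with $\varepsilon_m\to0$. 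With $V_s:=E[Z\,|\,\mathcal{F}_s]$ and $\Delta V_{jT/m}:=V_{jT/m}-V_{(j-1)T/m}$, the martingale property gives, for $M_0=1$, $E[M_TZ]-E(Z)=\sum_{j=1}^m E[\Delta M_{jT/m}\,\Delta V_{jT/m}]$; in particular a deviation of $M$ confined to the last step buys nothing in the limit, because $\Delta V_{mT/m}=Z-E[Z\,|\,\mathcal{F}_{(m-1)T/m}]\to0$. Each penalty $w^{*}_s\ge0$ vanishes to second order at $1$ with $(w^{*}_s)^{\prime\prime}(1)=1/|w^{\prime\prime}_s(0)|$, so summing over $j$ charges the cumulative deviation of $M$ by weights equal to the tail sums $\sum_{r\ge k}1/|w^{\prime\prime}_{rT/m}(0)|$, which blow up by the hypothesis (via the conjugate identity and \eqref{eq:3.3}). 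Balancing these cumulative charges against the bilinear gain by Young's inequality leaves a remainder of the order $\sum_k\big(\sum_{r\ge k}1/|w^{\prime\prime}_{rT/m}(0)|\big)^{-1}E[(\Delta V_{kT/m})^2]$, which tends to $0$; this balance is the time-grid analogue of the cancellation $\beta_t=O(n^{-1})$ exploited in Theorem \ref{thm:3.6}. Making this charging uniform over all positive martingales---in particular the correct coupled treatment of the initial scale $M_0$ and the global curvature of $w^{*}_s$ away from $1$ (where $u^{\prime\prime}_{i,s}$ nondecreasing is used as in \eqref{eq:3.3})---is the technical heart of the argument and the step I expect to be hardest.

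Finally I would transfer the convergence from $U$ to $H$ as in the second half of the proof of Theorem \ref{thm:3.3}, but without needing monotonicity in $m$. Since $x\mapsto U^{(m)}_{n,0}(x-Z)$ is increasing and concave, hence continuous, $U^{(m)}_{n,0}(H^{(m)}_{n,0}(Z)-Z)=0$, while the risk-loading property gives $E(Z)\le H^{(m)}_{n,0}(Z)\le\|Z\|_{\infty}$. Thus $\liminf_m H^{(m)}_{n,0}(Z)\ge E(Z)$; and if $\limsup_m H^{(m)}_{n,0}(Z)>E(Z)$ then, choosing $\delta>0$ with $H^{(m)}_{n,0}(Z)\ge E(Z)+\delta$ along a subsequence, monotonicity of $U^{(m)}_{n,0}$ in its argument gives $0=U^{(m)}_{n,0}(H^{(m)}_{n,0}(Z)-Z)\ge U^{(m)}_{n,0}\big((E(Z)+\delta)-Z\big)\to\delta$, a contradiction. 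Hence $\lim_{m\to\infty}H^{(m)}_{n,0}(Z)=E(Z)$.
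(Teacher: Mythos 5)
You have correctly identified both the easy upper bound and the genuine obstruction: the pointwise optimal time split realizing the sup-convolution of the $u_{i,jT/m}$ over the grid is a deterministic function of $Z$, hence not adapted, so the time analogue of Lemma \ref{lem:3.5} alone does not give the lower bound. Your proposed cure, however, stops precisely at the point you call the technical heart, and as sketched it has a real gap. The penalty $w^{*}_s$ controls deviations of $M_s$ from $1$ only through its global convexity; your expansion $(w^{*}_s)''(1)=1/|w''_s(0)|$ is local, and passing to ``cumulative charges equal to tail sums $\sum_{r\ge k}1/|w''_{rT/m}(0)|$'' requires divergence of \emph{tail} sums over $j\ge k_m$ with $k_m/m$ bounded away from $0$, which does not follow from the stated hypothesis (it asserts divergence only of the full sum from $j=0$). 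Moreover the Young-inequality balancing against $\sum_j E[\Delta M_{jT/m}\,\Delta V_{jT/m}]$ must be made uniform over all positive martingales, including those far from $1$ where the quadratic model of $w^{*}_s$ fails, and the initial value $M_0$ is not pinned to $1$ in the dual problem of Theorem \ref{thm:2.22}. None of this is carried out, so the lower bound is not established.

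The paper resolves the adaptedness obstruction by a much simpler primal device. Fix $u<T$, put $Z_u=E[Z|\mathcal{F}_u]$, and use concavity to write $U^{(m)}_{n,0}(Z)\ge\frac{1}{2}U^{(m)}_{n,0}(2Z_u)+\frac{1}{2}U^{(m)}_{n,0}(2(Z-Z_u))$. For the first term one allocates $2Z_u$ only over grid points $t>u$; since $Z_u\in L^{\infty}_u\subset L^{\infty}_t$ for all such $t$, the adaptedness constraint is vacuous there, the problem reduces to an unconstrained sup-convolution over a number of grid points tending to infinity, and the conjugate argument of Lemma \ref{lem:3.5} applies to give $E(2Z_u)$ in the limit. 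For the second term the crude allocation placing everything at time $T$ gives $U^{(m)}_{n,0}(2(Z-Z_u))\ge E[u_{n,T}(2(Z-Z_u))]$, which tends to $0$ as $u\to T$ by martingale convergence and dominated convergence. This yields $\lim_m U^{(m)}_{n,0}(Z)=E(Z)$ with no dual analysis. Your final passage from $U$ to $H$ is correct and in fact sharper than the paper's: the paper refers back to the endgame of Theorem \ref{thm:3.3}, which used monotonicity of $U_{n,0}$ in $n$, a property not available for the non-nested time grids, whereas your subsequence argument uses only monotonicity and continuity of $x\mapsto U^{(m)}_{n,0}(x-Z)$.
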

\begin{proof}
 Fix $u\in (0,T)$ and set $Z_u=E[Z|\mathcal{F}_u]$. 
By the concavity of $U_{n,0}^{(m)}$, we have 
\begin{equation*}
 U_{n,0}^{(m)}(Z)\ge \frac{1}{2}U_{n,0}^{(m)}(2Z_u)
  + \frac{1}{2}U_{n,0}^{(m)}(2(Z-Z_u)). 
\end{equation*}
Using the index set 
$\mathcal{T}_u^{(m)}:=\{kT/m: k=\lfloor mu/T\rfloor +1,\dots, T\}$, 
we get 
\begin{align}
\label{eq:3.10}
  U_{n,0}^{(m)}(2Z_u)&\ge\tilde{U}_{n,0}^{(m)}(2Z_u) \\
  &:=\sup\left\{\sum_{i=1}^n\sum_{t\in\mathcal{T}_u^{(m)}} 
     E[u_{i,t}(Y_{i,t})]: 
   2Z_u=\sum_{i=1}^n\sum_{t\in\mathcal{T}_u^{(m)}} Y_{i,t}, \; 
   Y_{i,t}\in L^{\infty}_u\right\} \nonumber \\ 
  &=\sup\left\{\sum_{i=1}^n E[u^{(m)}_i(Y_i)]: 
   2Z_u=\sum_{i=1}^m Y_i, \; 
   Y_i\in L^{\infty}_u\right\}, \nonumber
\end{align}
where the function $u_i^{(m)}$ is the sup-convolution defined by 
\begin{equation*}
 u_i^{(m)}(x)=\sup\left\{\sum\nolimits_{t\in\mathcal{T}_u^{(m)}}u_{i,t}(x): 
  \sum\nolimits_{t\in\mathcal{T}_u^{(m)}}x_t=x\right\}. 
\end{equation*}
In fact, for each $Y_{i,t}\in L^{\infty}_u$, 
$(i,t)\in\{1,\dots,n\}\times\mathcal{T}_u^{(m)}$, satisfying 
$2Z_u=\sum_{(i,t)\in\mathcal{T}_u^{(m)}}Y_{i,t}$, 
we define $X_{i,t}=Y_{i,t}$ if 
$(i,t)\in\{1,\dots,n\}\times\mathcal{T}_u^{(m)}$, $=0$ otherwise. 
Then $(X_t)\in\mathcal{A}_{n,0}(2Z_u)$, and 
\begin{equation*}
 \sum_{i=1}^n\sum_{t\in\mathcal{T}_u^{(m)}} 
  E[u_{i,t}(Y_{i,t})]= \sum_{(i,t)\in\T_{n,0}^{(m)}} E[u_{i,t}(X_{i,t})]
 \le U_{n,0}^{(m)}(2Z_u). 
\end{equation*}
Taking the supremum, we obtain the inequality in (\ref{eq:3.10}). 
Moreover, as in the proof of Proposition \ref{prop:2.17}, 
we get the second equality in (\ref{eq:3.10}). 

Since the number of elements of $\mathcal{T}_u^{(m)}$ goes to infinity 
as $m\to\infty$, in a way similar to Lemma \ref{lem:3.5}, we see that 
\begin{equation*}
 \lim_{m\to\infty}u^{(m)}_i(x)=x, \quad x\in\mathbb{R}, 
  \;\; i=1,\dots,n, 
\end{equation*}
whence $\lim_{m\to\infty}\tilde{U}_{n,0}^{(m)}(2Z_u)=E(2Z_u)=2E(Z)$. 
On the other hand, the right-continuous version of the martingale 
$Z_u$ converges to $Z$ a.s. as $u\to T$, whence by 
the dominated convergence theorem, 
$\lim_{u\to T}E[u_{n,T}(2(Z-Z_u))]=0$. 
This and $U_{n,0}^{(m)}(2(Z-Z_u))\ge E[u_{n,T}(2(Z-Z_u))]$ yield 
$\lim_{m\to\infty}U_{n,0}^{(m)}(Z)=E(Z)$. 
We can now complete the proof in the same way as the proof of 
Theorem \ref{thm:3.3}. 
\end{proof}


\section{Application to fixed payment insurance}\label{sec:4}

In this section, we apply the approach above 
to products of fixed payment type, including 
life insurance products and bank loans.


We consider a portfolio of $n$ contracts with duration $T$ in which 
the insurer pays a fixed payment to each insured at time $t=1,\dots,T$ 
if a specified event occurs in the interval $(t-1,t]$. 

We denote by $\tau_i$ the random time at which the $i$-th specified event 
occurs, and assume that $\tau_i$'s are mutually independent random 
variables on $(\Omega,\mathcal{F},P)$ satisfying $P(\tau_i>0)=1$ and 
$P(\tau_i>t)>0$ for all $t\in (0,\infty)$, $i=1,\dots,n$. 
Suppose that the reference asset is given by a riskless bond 
with deterministic interest rates. Then the discounted 
risk $Z$ of the portfolio of the contracts is represented as 
\begin{equation*}
 Z=\sum_{(i,t)\in\T_{n,1}}c_{i,t}1_{(t-1<\tau_i\le t)}, 
\end{equation*}
where $c_{i,t}$'s are the deterministic discounted payments.

We assume that the filtration $(\mathcal{F}_t)_{t=0,\dots,T}$ is 
given by 
\begin{equation*}
\mathcal{F}_t=\vee_{i=1}^n \sigma(\{\tau_i\le s\}: s=0,\dots,t),
 \qquad t=0,\dots,T.
\end{equation*}
For $t=0,\dots,T-1$ and $i=1,\dots,n$, 
we define the conditional probabilities $q_{i,t}$ and $p_{i,t}$ by  
\begin{equation*}
 q_{i,t}:=P(\tau_i\le t+1 | \tau_i>t), \quad 
 p_{i,t}:=1-q_{i,t}=P(\tau_i> t+1 | \tau_i>t).
\end{equation*}
They will play a basic role in the computations below. 
Notice that the following equalities hold: 
\begin{equation*}
q_{i,t} + p_{i,t} = 1, \quad t=0,\dots,T-1, \quad 
q_{i,0}=P(\tau_i\le 1), \quad p_{i,0}=P(1<\tau_i).
\end{equation*}

We need the following lemma: 
\begin{lem}
\label{lem:4.1}
Let $I$ be a nonempty subset of $\{1,\dots,n\}$, and let $Y_i$, $i\in I$, 
be integrable and $\sigma(\tau_i)$-measurable random variables. 
Then we have 
\begin{equation*}
 E\left[\prod_{i\in I}Y_i1_{(\tau_i>s)}\bigg| \mathcal{F}_s\right]
 =\prod_{i\in I}\frac{E[Y_i1_{(\tau_i>s)}]}{P(\tau_i>s)}1_{(\tau_i>s)}, 
 \quad s=0,\dots,T. 
\end{equation*}
\end{lem}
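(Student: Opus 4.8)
The plan is to exploit the mutual independence of the $\tau_i$'s by writing $\mathcal{F}_s$ as a join of independent $\sigma$-algebras and reducing the joint conditional expectation to a product of single-coordinate ones. First I would record that, by the definition of the filtration, $\mathcal{F}_s=\vee_{i=1}^n\mathcal{G}_i^s$, where $\mathcal{G}_i^s:=\sigma(\{\tau_i\le r\}:r=0,\dots,s)\subseteq\sigma(\tau_i)$. Since the $\tau_i$ are mutually independent, the $\sigma$-algebras $\sigma(\tau_i)$ are independent, and hence so are the sub-$\sigma$-algebras $\mathcal{G}_i^s$. Each $W_i:=Y_i1_{(\tau_i>s)}$ is $\sigma(\tau_i)$-measurable and integrable, so the $W_i$, $i\in I$, are independent and their product is integrable.

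The key step is a factorization lemma: if $(\mathcal{H}_i)$ are independent $\sigma$-algebras, $W_i$ is $\mathcal{H}_i$-measurable and integrable, and $\mathcal{G}_i\subseteq\mathcal{H}_i$, then $E[\prod_i W_i\mid\vee_i\mathcal{G}_i]=\prod_i E[W_i\mid\mathcal{G}_i]$. I would prove this by verifying the defining property of conditional expectation against the $\pi$-system of sets $\cap_i A_i$ with $A_i\in\mathcal{G}_i$, which generates $\vee_i\mathcal{G}_i$: for such a set both $E[\prod_i W_i\,1_{\cap_i A_i}]$ and $E[\prod_i E[W_i\mid\mathcal{G}_i]\,1_{\cap_i A_i}]$ factor, by independence of the $\mathcal{H}_i$, into $\prod_i E[W_i1_{A_i}]$ (using $A_i\in\mathcal{G}_i$ and the tower property for the second one). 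Applying this with $\mathcal{H}_i=\sigma(\tau_i)$ and $\mathcal{G}_i=\mathcal{G}_i^s$ reduces the claim to computing $E[Y_i1_{(\tau_i>s)}\mid\mathcal{G}_i^s]$ for each single $i\in I$.

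Finally I would compute each single-coordinate conditional expectation directly. The $\sigma$-algebra $\mathcal{G}_i^s$ is finite, with atoms $\{r-1<\tau_i\le r\}$ for $r=1,\dots,s$ (modulo the null set $\{\tau_i\le 0\}$) together with $\{\tau_i>s\}$. On each atom of the first kind $1_{(\tau_i>s)}=0$, while on $\{\tau_i>s\}$, which has positive probability by the hypothesis $P(\tau_i>t)>0$, the conditional expectation equals the average $E[Y_i1_{(\tau_i>s)}]/P(\tau_i>s)$. Hence $E[Y_i1_{(\tau_i>s)}\mid\mathcal{G}_i^s]=\bigl(E[Y_i1_{(\tau_i>s)}]/P(\tau_i>s)\bigr)1_{(\tau_i>s)}$, and substituting into the factorized expression yields the stated identity.

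I expect the main obstacle to be the careful justification of the factorization lemma, specifically ensuring integrability of $\prod_{i\in I}W_i$ and checking the defining equality only on the generating $\pi$-system (rather than on all of $\vee_i\mathcal{G}_i^s$), so that a monotone-class or uniqueness-of-measure argument closes the gap. By comparison, the single-coordinate computation on the finite atomic $\sigma$-algebra $\mathcal{G}_i^s$ is routine.
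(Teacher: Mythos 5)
Your proposal is correct and follows essentially the same route as the paper: both arguments reduce the identity to verifying the defining property of conditional expectation on the generating $\pi$-system of intersections of events from the individual $\sigma$-algebras $\sigma(\{\tau_i\le r\}:r\le s)$, and both use mutual independence of the $\tau_i$ to factor the resulting expectations. The only difference is organizational — you package the independence step as a general factorization lemma and then do a single-coordinate computation on the finite atomic $\sigma$-algebra, whereas the paper verifies the product identity in one pass on the generating class — but the underlying argument is the same.
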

\begin{proof}
We consider the family of events 
\begin{equation*}
 \mathcal{G}_s=\left\{\cap_{j\in J}\{\tau_j>s_j\}: s_j=0,\dots,t, 
 \; j\in J,\; J\subset\{1,\dots,n\}\right\}. 
\end{equation*}
Then $\mathcal{G}_s$ contains $\Omega$, is closed under 
intersection, and generates $\mathcal{F}_s$. 
Thus in view of Williams \cite[p.\ 231]{W}, it is enough to show that 
\begin{equation*}
 E\left[\prod_{i\in I}Y_i1_{(\tau_i>s)}1_A\right]
 =E\left[\prod_{i\in I}k_i1_{(\tau_i>s)}1_A\right], 
 \quad A\in\mathcal{G}_{s}, 
\end{equation*}
where $k_i=E[Y_i1_{(\tau_i>s)}]/P(\tau_i>s)$. 
If $A$ is of the form $\cup_{j\in J}(\tau_j>s_j)$, then 
denoting $B=\cup_{j\in J\setminus (I\cap J)}(\tau_j>s_j)$, 
we see that 
\begin{align*}
 &E\left[\prod_{i\in I}Y_i1_{(\tau_i>s)}1_A\right]
 =E\left[\prod_{i\in I}Y_i1_{(\tau_i>s)}1_B\right]
 =\prod_{i\in I}E[Y_i1_{(\tau_i>s)}1_B] \\
 &=\prod_{i\in I}k_iP(\tau_i>s)P(B)
 =E\left[\prod_{i\in I}k_i1_{(\tau_i>s)}1_B\right]
 =E\left[\prod_{i\in I}k_i1_{(\tau_i>s)}1_A\right], 
\end{align*}
where we have used the fact that $\tau_i$'s are mutually 
independent and $s_j\le s$. 
Thus the lemma follows. 
\end{proof}


Now, recall that 
the indifference premium $H_{t}(Z)$, based on the expected 
exponential utilities, is given by (\ref{eq:2.17}). 

Let us introduce the sequence $(h_{i,t})_{t=1}^T$ defined by 
the following backward iteration: 
\begin{equation*}
\begin{cases}
 h_{i,T}=1, \\
 h_{i,t}=\left[e^{\beta_t c_{i,t}}q_{i,t-1} 
 + h_{i,t+1}^{\beta_t}p_{i,t-1}\right]^{1/\beta_t}, \quad t=1,\dots,T-1. 
\end{cases}
\end{equation*}
\begin{thm}
For $t=0,\dots,T$, the indifference premium $H_{t}(Z)$ has 
the following representation in terms of $(h_{i,s})$: 
\begin{equation}
\label{eq:4.2}
 H_{t}(Z)=\sum_{i=1}^n\left\{\sum_{s=1}^t c_{i,s}1_{(s-1<\tau\le s)} 
  + 1_{(t<\tau_i)}\log h_{i,t+1}\right\}, 
\end{equation}
where $\sum_{s=1}^0=0$. 
\end{thm}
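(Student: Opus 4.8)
The plan is to prove the representation (\ref{eq:4.2}) by backward induction on $t$, using the recursive characterization (\ref{eq:2.17}) of $H_t$ together with Lemma \ref{lem:4.1}. For the base case $t=T$ one has $H_T(Z)=Z=\sum_{i=1}^n\sum_{s=1}^T c_{i,s}1_{(s-1<\tau_i\le s)}$, which is exactly (\ref{eq:4.2}) at $t=T$ under the natural convention $h_{i,T+1}=1$ (so that $\log h_{i,T+1}=0$ and the terminal indicator term drops out). I would then assume that (\ref{eq:4.2}) holds at $t+1$ for some $t\in\{0,\dots,T-1\}$ and deduce it at $t$.

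First I would substitute the induction hypothesis into (\ref{eq:2.17}) and exponentiate, writing $e^{\beta_{t+1}H_{t+1}(Z)}=\prod_{i=1}^n\Psi_i$ with each $\Psi_i$ being $\sigma(\tau_i)$-measurable. Because the events $\{s-1<\tau_i\le s\}$, $s=1,\dots,t+1$, together with $\{\tau_i>t+1\}$ partition $\Omega$, each factor collapses to the single sum $\Psi_i=\sum_{s=1}^{t+1}e^{\beta_{t+1}c_{i,s}}1_{(s-1<\tau_i\le s)}+h_{i,t+2}^{\beta_{t+1}}1_{(\tau_i>t+1)}$. I would then separate the part already fixed by $\mathcal{F}_t$ from the still-random part: setting $C_i=\sum_{s=1}^t e^{\beta_{t+1}c_{i,s}}1_{(s-1<\tau_i\le s)}$ (which is $\mathcal{F}_t$-measurable and supported on $\{\tau_i\le t\}$) and $Y_i=e^{\beta_{t+1}c_{i,t+1}}1_{(t<\tau_i\le t+1)}+h_{i,t+2}^{\beta_{t+1}}1_{(\tau_i>t+1)}$ (which is $\sigma(\tau_i)$-measurable and supported on $\{\tau_i>t\}$), one has $\Psi_i=C_i+Y_i1_{(\tau_i>t)}$.

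The crux is to compute $E[\prod_i\Psi_i\mid\mathcal{F}_t]$ and show it factors across $i$; this is the step where I expect the main difficulty, and it is precisely what Lemma \ref{lem:4.1} is designed to handle. I would expand the product over subsets $J\subseteq\{1,\dots,n\}$ as $\prod_i\Psi_i=\sum_J\bigl(\prod_{i\notin J}C_i\bigr)\prod_{i\in J}Y_i1_{(\tau_i>t)}$, pull the $\mathcal{F}_t$-measurable factors $\prod_{i\notin J}C_i$ out of the conditional expectation, and apply Lemma \ref{lem:4.1} with $s=t$ to each surviving term, giving $E[\prod_{i\in J}Y_i1_{(\tau_i>t)}\mid\mathcal{F}_t]=\prod_{i\in J}(E[Y_i1_{(\tau_i>t)}]/P(\tau_i>t))1_{(\tau_i>t)}$. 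Since $E[Y_i1_{(\tau_i>t)}]/P(\tau_i>t)=E[Y_i\mid\tau_i>t]=e^{\beta_{t+1}c_{i,t+1}}q_{i,t}+h_{i,t+2}^{\beta_{t+1}}p_{i,t}$, which by the defining recursion for $(h_{i,s})$ equals $h_{i,t+1}^{\beta_{t+1}}$, reassembling the subset sum yields $E[\prod_i\Psi_i\mid\mathcal{F}_t]=\prod_i\bigl(C_i+h_{i,t+1}^{\beta_{t+1}}1_{(\tau_i>t)}\bigr)$. The role of the independence of the $\tau_i$ is entirely absorbed into Lemma \ref{lem:4.1}, so the whole factorization reduces to routine bookkeeping once that lemma is invoked.

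Finally, using the partition once more I would rewrite each factor in exponential form, $C_i+h_{i,t+1}^{\beta_{t+1}}1_{(\tau_i>t)}=\exp\bigl(\beta_{t+1}[\sum_{s=1}^t c_{i,s}1_{(s-1<\tau_i\le s)}+1_{(t<\tau_i)}\log h_{i,t+1}]\bigr)$, whence $E[e^{\beta_{t+1}H_{t+1}(Z)}\mid\mathcal{F}_t]=\exp\bigl(\beta_{t+1}\sum_i[\sum_{s=1}^t c_{i,s}1_{(s-1<\tau_i\le s)}+1_{(t<\tau_i)}\log h_{i,t+1}]\bigr)$. Taking logarithms and dividing by $\beta_{t+1}$ in (\ref{eq:2.17}) gives (\ref{eq:4.2}) at $t$, closing the induction. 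The structural point I expect to emphasize is that the recursion defining $(h_{i,s})$ is engineered exactly so that $E[Y_i\mid\tau_i>t]=h_{i,t+1}^{\beta_{t+1}}$, which is what makes the inductive form reproduce itself at each step.
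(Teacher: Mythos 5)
Your proof is correct and follows essentially the same route as the paper's: backward induction on $t$ via the recursion (\ref{eq:2.17}), factoring $e^{\beta_{t+1}H_{t+1}(Z)}$ into $\sigma(\tau_i)$-measurable factors, expanding the product over subsets so that Lemma \ref{lem:4.1} gives the factorized conditional expectation, and recognizing $e^{\beta_{t+1}c_{i,t+1}}q_{i,t}+h_{i,t+2}^{\beta_{t+1}}p_{i,t}=h_{i,t+1}^{\beta_{t+1}}$ from the defining recursion of $(h_{i,s})$. Your explicit convention $h_{i,T+1}=1$ for the base case $t=T$ is a detail the paper leaves implicit, but otherwise the two arguments coincide.
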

\begin{proof}
 We prove (\ref{eq:4.2}) by the backward induction. For $t=T$, the equality 
(\ref{eq:4.2}) clearly holds. 
Suppose that (\ref{eq:4.2}) holds for some $t\le T-1$, and 
we write $y_{i,s}=\log h_{i,s}$.  
Then 
\begin{align*}
 H_{n,t-1}(Z)
 &= \sum_{\substack{1\le i\le n\\[1pt] 1\le s\le t-1}}c_{i,s}
     1_{\{s-1<\tau\le s\}} 
   +\frac{1}{\beta_t}\log \Theta_{t-1}
\end{align*}
with 
\begin{align*}
 \Theta_{t-1}&=E\left[\exp\left\{\beta_t\sum_{i=1}^n\left(c_{i,t}
  1_{(t-1<\tau_i\le t)}+y_{i,t+1}1_{(t<\tau_i)}\right)\right\}
    \bigg| \mathcal{F}_{t-1}\right] \\
 &= E\left[\prod_{i=1}^n\left\{\exp\left(\beta_tc_{i,t}
  1_{(t-1<\tau_i\le t)}+y_{i,t+1}1_{(t<\tau_i)}\right)1_{(t-1<\tau_i)}
  +1_{(\tau_i\le t-1)}\right\}\bigg| \mathcal{F}_{t-1}\right]. 
\end{align*}
Using Lemma \ref{lem:4.1} and the general fact that 
$\prod_{i=1}^n(a_i+b_i)=\sum_{m=0}^n\sum_{\Lambda\in\mathcal{I}_m}
\prod_{i\in\Lambda}a_i\prod_{j\notin\Lambda}b_j$ with $\mathcal{I}_m$ 
being the family of subsets of $\{1,\dots,n\}$ consisting of 
$m$ elements, we obtain  
\begin{align*}
 \Theta_{t-1}&=\sum_{m=0}^n\sum_{\Lambda\in\mathcal{I}_m}
 \prod_{j\notin\Lambda}1_{(\tau_j\le t-1)}
 E\left[\prod_{i\in\Lambda}\exp\left(\beta_tc_{i,t}1_{(t-1<\tau_i\le t)}
  + y_{i,t+1}1_{(t<\tau_i)}\right)1_{(t-1<\tau_i)}
 \bigg|\mathcal{F}_{t-1}\right] \\
 &= \sum_{m=0}^n\sum_{\Lambda\in\mathcal{I}_m}
 \prod_{j\notin\Lambda}1_{(\tau_j\le t-1)}\prod_{i\in\Lambda}
 \left(e^{\beta_tc_{i,t}}q_{i,t-1}+e^{\beta_ty_{i,t+1}}p_{i,t-1}\right)
  1_{(t-1<\tau_i)} \\
 &=\prod_{i=1}^n\left\{\left(e^{\beta_tc_{i,t}}q_{i,t-1}
   +e^{\beta_ty_{i,t+1}}p_{i,t-1}\right)1_{(t-1<\tau_i)} 
   +1_{(\tau_i\le t-1)}\right\}. 
\end{align*}
Thus $(1/\beta_t)\log\Theta_{t-1}=\sum_{i=1}^ny_{i,t}1_{(t<\tau_i)}$, 
which completes the proof. 
\end{proof}


\section{Conclusion}\label{sec:5}

In this paper, we propose a premium calculation principle determined by 
an efficient risk diversification for portfolios of cash flows. 
In so doing, we use the dynamic version of the sup-convolution of 
utility functionals to consider the effect of a Pareto optimal diversification 
of risks based on the insurer's multidimensional risk preference. 
This approach aims to give a possible theoretical foundation for 
the problem of determining the risk loading for portfolios of cash flows.   
We find explicit computation formulas for the variance and exponential 
premium principles, which extend the classical counterparts in the one period 
setting. 
We also show limit theorems asserting that the risk loading of the premium 
decreases to zero when the number of divisions of risk goes to infinity.  

In our future research, we wish to focus on the implementation 
and various extensions of the results obtained in this paper. 
Possible future research topics include 
\begin{itemize}
 \item the identification problem of the 
  multidimensional risk preference; 
 \item the incorporation of the market interest rate into our model; 
 \item the case of more complex filtration 
  $\{\mathcal{F}_{i,t}\}_{(i,t)\in\T}$ where $\T$ is a directed index set; 
 \item the case of monetary utility functionals; 
 \item the continuous time setting. 
\end{itemize}

\end{document}